\newcommand{\diam}{\mbox{\rm diam}}\newcommand{\be}{\begin{eqnarray}}
\newcommand{\ee}{\end{eqnarray}}\newcommand{\card}{\#}\newcommand{\eps}{{\mbox{$\epsilon$}}}\newcommand{\e}{{\varepsilon}}\newcommand{\R}{{\mathbb R}}\newcommand{\K}{{\mathcal K}}
\newcommand{\ch}{{\cosh}}
\newcommand{\sh}{{\sinh}}
\newtheorem{theorem}{Theorem}\newtheorem{lemma}[theorem]{Lemma}\theoremstyle{definition}\theoremstyle{remark}\numberwithin{equation}{section}\input epsf.sty
\begin{document}\thispagestyle{empty}

%
%
%
%
%
%
%
%
%
%
%
%
\newcommand{\G}{{\mathcal G}}
\newcommand{\C}{{\mathbb C}}
\newcommand{\Pa}{{P_{1,\theta}(y)}}
\newcommand{\Pb}{{P_{2,\theta}(y)}}
\newcommand{\Pshp}{{P_{2,\theta}^\sharp(y)}}
\newcommand{\Pflt}{{P_{2,\theta}^\flat(y)}}
\newcommand{\OTL}{{\Omega(\theta,\ell)}}
\newcommand{\rsz}{{R(\theta^*)}}

\title[Buffon needles landing near  Sierpinski gasket]{{Buffon needles landing near  Sierpinski gasket}}
\author{Matthew Bond}\address{Matthew Bond, Dept. of Math., Michigan State University.
{\tt bondmatt@msu.edu}}
\author{Alexander Volberg}\address{Alexander Volberg, Dept. of  Math., Michigan State Univ. 
{\tt volberg@math.msu.edu}}\,.

\thanks{Research of the authors was supported in part by NSF grants  DMS-0501067, 0758552 }
\subjclass{Primary: 28A80.  Fractals, Secondary: 28A75,  Length,
area, volume, other geometric measure theory           60D05,
Geometric probability, stochastic geometry, random sets
28A78  Hausdorff and packing measures}
\begin{abstract}In this paper we modify the method of Nazarov, Peres, and Volberg \cite{NPV} to get an estimate from above of the Buffon needle probability of the $n$th partially constructed Sierpinski gasket of Hausdorff dimension 1.
\end{abstract}
\maketitle

\section{{\bf Introduction}} \label{sec:intro}
Among   self-similar planar sets of Hausdorff dimension $1$, the simplest are the Sierpinski gasket $\G$ (formed by three self-similarities) and the square $1/4$ corner Cantor set $\K$ (formed by four self-similarities). By the Besicovitch projection theorem, these irregular sets of positive and finite Hausdorff $H^1$ measure must have zero length in almost every orthogonal projection onto a line. One may partially construct these sets in the usual way by taking the convex hull and then taking the union of all possible images of $n$-fold compositions of the similarity maps. Then one may ask the rate at which the Favard length -- the average over all directions of the length of the orthogonal projection onto a line in that direction -- of these sets $\G_n$ and $\K_n$ decay to zero as a function of $n$.

In the case of $\K_n$, an upper bound and a lower bound are known. The lower bound was obtained relatively easily in a paper of Bateman and Volberg \cite{BV} (see also \cite{BV2} for a related question): it is $c\frac{\log\,n}{n}$. The argument painlessly yields the same lower bound for $\G_n$.

The upper bound for $\K_n$ is due to Nazarov, Peres, and Volberg \cite{NPV}: if $p<1/6$, $Fav(\K_n)\leq\frac{c_p}{n^{p}}$. To get this estimate, the radial symmetry was used in addition to a reflection symmetry which $\G_n$ lacks. The main idea of \cite{NPV}  was to split the directions into good and singular ones, and to show that the measure  of singular directions is small. This idea holds for $\G_n$, but the changes which must be made are not completely superficial. The goal of this paper is to make whatever changes are necessary to find some upper bound of the decay rate for the $n$-th partial Sierpinski gasket. The struggle, as often in analysis, is with the set of small values of a certain function (in our case here the function is an exponential polynomial). In \cite{NPV} this exponential polynomial happened to be just a sine function. The case of the gasket  is  much closer to the generic case as the polynomial becomes a rather general $3$-term exponential sum. Notice that, in fact, it is an entire function of $2$ variables: one variable is given by the choice of  the direction of projection (and in our considerations below should be made even complex by some reason!), another variable is its ``spectral" variable. Sorting out the zeros and the set of small values of this entire function will give us some headache. However, the advantage is that the Sierpinski gasket provides a much better glimpse at the general self-similar sets completely irregular in the sense of Besicovitch than the $1/4$ corner Cantor set. We believe that using this approach one can work with all such sets. We pay for that: while \cite{NPV} combined combinatorics with Fourier analyis, here we need to add a certain amount of complex analysis into reasoning. Rather strangely, a special case of the Carleson Embedding Theorem, in the form of Lemma \ref{CET}, plays an important part in our reasoning.

Notice that product structure of the underlying Cantor set was recently explored in Laba-Zhai's paper \cite{LZ}, where they extended the result of \cite{NPV} to product Cantor sets. Their argument involves a combinatorial reasoning related to tiling studied by Kenyon \cite{kenyon} and Lagarias-Wang \cite{lawang}.

Consider the function $f_{n,\theta}:\R\to\mathbb N$ defined by $$f_{n,\theta}=\sum_\text{Sierpinski triangles T}{\chi_{proj_\theta(T)}}$$ Note that $Fav(\G_n)=\pi^{-1}\int_0^{\pi}|supp(f_{n,\theta})|d\theta$. In \cite{NPV} and \cite{BV}, the $L^p$ norms of the analog of this function for squares were studied to obtain Buffon needle probability estimates for $\K_n$ -- in \cite{BV}, $p=1,2$ were related to $\chi_{supp(f_{n,\theta})}$ via the Cauchy inequality, while in \cite{NPV}, $p=2$ was studied via Fourier transforms and related to the case of $p=\infty$. Indeed, if we ignore the averaging over $\theta$ for the moment and consider a sum of characteristic functions of intervals whose $L^1$ norm is 1, then heuristically, the argument is that as the mass becomes more concentrated on smaller sets, the $L^p$ norms will grow. Thus for $p>1$, a large $L^p$ norm should indicate that the support of a function is small, and vice versa. Therefore, to show that the $Fav(\G_n)$ is small, we will show that if we fix N large, then for most angles $\theta$ it will follow that $||f_{n,\theta}||_\infty$ is large for at least one $n<N$.

In the first part of the paper, Section \ref{sec:fourier}, we will study and prove one such statement using Fourier analysis. The task of making a rigorous link between the $L^\infty$ norm of $f_{n,\theta}$ and the needle probability of $\G_n$ will be undertaken in the combinatorial part of the paper, Section \ref{combinatorics}. Many of the claims of Section \ref{sec:fourier} will rest on the complex-analytic reasoning of Section \ref{complex}. Finally, some standard lemmas will be appealed to repeatedly, which we will state and prove in Section \ref{lemmas}.

The main result of this article is the following estimate (of course far from being optimal, see Section \ref{discu} for the further discussion).
\begin{theorem}
\label{mainth1}
$$
Fav(\G_n)\le C\, e^{-c\,\sqrt{\log \log n}}\,.
$$
\end{theorem}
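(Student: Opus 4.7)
I plan to follow the broad architecture of \cite{NPV}: reduce $\Fav(\G_N)$ to a pointwise-largeness statement for $\|f_{n,\theta}\|_\infty$ at \emph{some} scale $n\le N$ (holding for most $\theta$), and prove that largeness by Fourier analysis. The self-similarity of $\G_n$ gives a product factorization
\[
\hat f_{n,\theta}(\xi) \,=\, \hat g_\theta(\xi)\prod_{k=0}^{n-1}\Phi_\theta(2^{-k}\xi),
\]
where $g_\theta$ is the projection of a single generation-$n$ triangle and $\Phi_\theta$ is the $3$-term exponential sum encoding the three contractions defining $\G$. Plancherel then converts the desired lower bound on $\|f_{n,\theta}\|_2^2$ (and hence, via $\|f\|_\infty\ge\|f\|_2^2/\|f\|_1$, on $\|f_{n,\theta}\|_\infty$) into the statement that $\prod_{k<n}|\Phi_\theta(2^{-k}\xi)|$ is not too small on some annulus in $\xi$. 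The combinatorial reduction of Section \ref{combinatorics} is expected to yield an inequality roughly of the form $\Fav(\G_N) \lesssim |\{\theta : \|f_{n,\theta}\|_\infty \le K \text{ for all } n\le N\}| + C/K$, matching the concentration heuristic from the introduction.

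The main obstacle — and the reason complex analysis becomes essential, unlike in \cite{NPV} — is the analysis of the zero set of $\Phi_\theta$ in the $(\theta,\xi)$-plane. The $\K_n$ factor in \cite{NPV} was essentially a sine function with explicit simple zeros, whereas the $3$-term sum $\Phi_\theta$ has an intricate zero variety admitting no closed-form description, so the small-value set $\{|\Phi_\theta(\xi)|<\e\}$ cannot be read off directly. My plan for Section \ref{complex} is to complexify $\theta$ and apply entire-function tools — Jensen's formula to count zeros in disks, Tur\'an-type minorant inequalities to prevent $|\Phi_\theta|$ from being small away from its zeros — to obtain quantitative bounds on the size and position of the set where $|\Phi_\theta(\xi)|$ is small.

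Finally, the measure of ``bad'' angles $\theta$ — those for which the dilated orbit $\{2^{-k}\xi\}_{k<n}$ meets the small-value set at many scales $k$ — should be bounded by a tree-Carleson packing argument. Viewing each scale's bad-$\xi$ event as a generation of a dyadic tree and summing the masses over scales, Lemma \ref{CET} (a Carleson Embedding estimate) converts the scale-by-scale contributions into a global bound on $|\{\text{bad }\theta\}|$. Calibrating the complex-analytic loss, the Carleson bound, and the combinatorial threshold $K$ against one another is the quantitative heart of the argument; the weaker control coming from the $3$-term case (as opposed to a single sine) is what degrades the $\K_n$-type polynomial rate $n^{-p}$ of \cite{NPV} to the stretched double logarithm $e^{-c\sqrt{\log\log n}}$.
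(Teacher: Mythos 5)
Your high-level plan matches the paper's architecture (Fourier product factorization, $\|f\|_\infty \ge \|f\|_2^2/\|f\|_1$, complexification of $\theta$ with Jensen/Blaschke zero-counting, a Carleson-embedding lemma, and the combinatorial reduction of $\Fav$ to overlap-with-multiplicity-$K$), but two points in your description are off in a way that would matter if you tried to execute it.

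First, you mischaracterize the role of Lemma \ref{CET}. You describe it as a ``tree-Carleson packing argument'' that aggregates scale-by-scale bad-$\xi$ events into a bound on $|\{\text{bad }\theta\}|$. That is not what happens. Lemma \ref{CET} is a purely spectral estimate applied at a \emph{fixed} $\theta$: if $\theta$ is bad (i.e.\ $\|f_{n,\theta}\|_\infty \le K$ for all $n \le N$), then the frequencies of the exponential polynomial $\widehat{\nu}$ cluster in unit intervals with multiplicity at most $K$, and the Carleson embedding for $H^2(\mathbb{C}_+)$ turns that multiplicity bound into an upper bound on $\int_0^1 |\sum c_j e^{i\alpha_j y}|^2\,dy$. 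It is used to show that the $L^2$ mass of $\widehat{\nu_N}$ is \emph{not} concentrated on $[0,3^{-m}]$, and again to bound $\int_{\mathrm{SSV}} |P_{2,\theta}^\sharp|^2\,dy$. No tree structure appears, and the bound on $|\{\text{bad }\theta\}|$ does not come from packing.

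Second, and more consequentially, you omit the device that actually makes the $\theta$-average close: the three-fold factorization $P_\theta = P_{1,\theta}\,P_{2,\theta}^\flat\,P_{2,\theta}^\sharp$. The integral over $\theta$ of $|P_{2,\theta}^\flat|^2$, restricted to the real parts $y_s(k,t,\theta)$ of the (analytically continued, simple) zeros of $\phi_\zeta$, is controlled by a Riesz product estimate $\prod_k \frac{7+2\cos(3^k v)}{9}$ via Lemma \ref{R1}, after the monotone change of variable $v = 3^{-s}g_1(k,t,\theta)$ whose Jacobian is bounded below by Theorem \ref{conclusion}. Without this $P^\flat$/$P^\sharp$ split — $P^\sharp$ handled by CET in $y$, $P^\flat$ handled by Riesz products in $\theta$ — the small-value set cannot be discounted and the final inequality \eqref{totalest} does not follow. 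Your plan leaves this step (the ``quantitative heart,'' as you call it) entirely to calibration, but it is precisely where the new mechanism lives. A minor further slip: the Sierpinski gasket has scaling ratio $1/3$, so the factorization is $\prod_k \phi_\theta(3^{-k}\xi)$, not base $2$.
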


\medskip

\noindent{\bf Acknowledgements.} We are grateful to Fedja Nazarov for many valuable discussions concerning a general case of a self-similar set.

\section{{\bf The Fourier-analytic part}} \label{sec:fourier}
Our computations will be simplified if we first rescale $\G_n$ by a factor absolutely comparable to 1 and bound the triangles by discs and study this set instead. That is, for $\alpha\in\lbrace -1, 0, 1\rbrace^{n+1}$ let $$z_\alpha:=\sum_{k=0}^n{(\frac{1}{3})^ke^{i\pi[\frac{1}{2}+\frac{2}{3}\alpha_k]}},$$ and then let $$\G_n:=\bigcup_{\alpha\in\lbrace -1, 0, 1\rbrace^{n+1}}B(z_\alpha,3^{-n}).$$ Note that $\G_n$ has $3^{n+1}$ discs of radius $3^{-n}.$ After a rescaling, the usual $n+1$st Sierpinski gasket (composed of $3^{n+1}$ triangles) sits inside of $\G_n$. We may still speak of the approximating discs as ``Sierpinski triangles."

Observe that $f_{n,\theta}=\nu_n *3^n\chi_{[-3^{-n},3^{-n}]},$ where $\nu_n :=*_{k=0}^n\widetilde{\nu}_k$ and $$\widetilde{\nu}_k=\frac{1}{3}[\delta_{3^{-k}cos(\pi/2 -\theta)} +\delta_{3^{-k}cos(-\pi/6 -\theta)} +\delta_{3^{-k}cos(7\pi/6 -\theta)}].$$

Let us fix $p$ and let $E:=E_{K}:=\lbrace\theta : \sup_{n\le N} ||f_{n,\theta}||_{L^\infty(x)} \leq K\rbrace$. Let $K>>1$, and suppose $|E|:=|E_{K}|=\frac{1}{K}$. We will show that if 
$
K\le e^{\eps_0\sqrt{\log N}}$ ($\eps_0$ is a small absolute positive number), this would bring us the contradiction. Therefore, we will get an estimate from above on the measure of the set $E$ of ``bad" directions.

Let $N=\exp\bigg(\frac{\log K}{\eps_0}\bigg)^2$ (whenever an integer is defined to be a non-integer, it is understood that one rounds). Then $\forall\theta\in E_{K}$,
$$K\geq ||f_{N,\theta}||_{L^\infty(x)}\int{f_{N,\theta}(x)dx}\geq ||f_{N,\theta}||^2_{L^2(x)}\approx ||\widehat{f_{N,\theta}}||^2_{L^2(y)}\geq C\int_1^{3^{N/2}}{|\widehat{\nu}(y)|^2dy}$$
(Note that $\frac{1}{2}3^N\chi_{[-3^{-N},3^{-N}]}$ converges quickly as an approximate unit)

Splitting $[1,3^{N/2}]$ into $N/2$ pieces $[3^k,3^{k+1}]$ and taking blocks of such consecutive pieces, the blocks cannot all have large intergrals simultaneously. That is, if we fix $0<A'<B'<1/2$, then $\forall m\in [0,A'N]  \exists \,n\in [B'N,N/2]$ s.t. $$
\frac{1}{|E_{K}|}\int_{E_{K}}{\int_{3^{n-m}}^{3^n}{|\widehat{\nu_N}|^2dyd\theta}}\leq CKm/N\,.
$$
 So if 
 $$
 E:=\lbrace\theta\in E_{K}: \int_{3^{n-m}}^{3^n}{|\widehat{\nu_N}|^2dyd\theta}\leq 2CKm/N\rbrace\,.
 $$
  then $|E|\geq\frac{1}{2K}$.

Define $c_1=\cos(\theta-\pi/2)$, $c_2=\cos(\theta-7\pi/6)$, $c_3=\cos(\theta+\pi/6)$, and similarly, $s_1=\sin(\theta-\pi/2)$, etc. Let $$\phi_\theta(y)=\frac{1}{3}\sum_{j=1}^3{e^{-ic_jy}}.$$ Then $\widehat{\nu_N}(y)=\prod_{k=0}^N{\phi_\theta(3^{-k}y)}\approx\prod_{k=0}^n{\phi_\theta(3^{-k}y)}$ for $y\in [3^{n-m},3^n]$. So changing variable ($y\to 3^ny$) and reindexing the product ($k\to n-k$), we get

$$\int_{3^{n-m}}^{3^n}{|\widehat{\nu_N}|^2dyd\theta}\approx\int_{3^{n-m}}^{3^n}{\prod_{k=0}^n{|\phi_\theta(3^{-k}y)|^2}dyd\theta}=3^n\int_{3^{-m}}^1{\prod_{k=0}^n{|\phi_\theta(3^{k}y)|^2}dyd\theta}$$

So for $\theta\in E$, $3^n\int_{3^{-m}}^1{\prod_{k=0}^n{|\phi_\theta(3^{k}y)|^2}dyd\theta}\leq \frac{CKm}{N}$. Later, we will let $m\approx \log\,K$ and $l=\alpha \log\,K$ (for an appropriate $\alpha$) and show that $\exists\theta\in E$ such that 

\begin{equation}\label{totalest}
\int_{3^{-m}}^1{\prod_{k=0}^n{|\phi_\theta(3^{k}y)|^2}dy}\geq C3^{-n+m-A\,\ell\,m},
\end{equation}
 resulting in a choice of $m$.

First, let us write $\prod_{k=0}^n\phi_\theta(3^ky)=P_\theta(y)=P_{1,\theta}(y)P_{2,\theta}(y)$, where $$P_{1,\theta}(y)=\prod^{m}_{k=0}\phi_\theta(3^ky)\,\,\,\text{and}\,\,\, P_{2,\theta}(y)=\prod_{k=m+1}^n\phi_\theta(3^ky).$$ We want
\begin{equation}\label{P2below}
\int_{3^{-m}}^1{|P_{2,\theta}|^2dy}\geq C3^{m-n}
\end{equation}
with a proportion of the contribution to the integral separated away from the complex zeroes of $P_{1,\theta}$.

First, Salem's trick for $\int_0^1{|P_{2,\theta}(y)|^2dy}$:

Let $h(y):=(1-|y|)\chi_{[-1,1]}(y)$, and note that $\hat{h}(\lambda)=C\frac{1-\cos\lambda}{\lambda ^2}>0$. Then if we write $P_{2,\theta}=3^{m-n}\sum_{j=1}^{3^{n-m}}{e^{i\lambda_jy}}$, we get $$\int_0^1{|P_{2,\theta}(y)|^2dy}\geq 2\int_{-1}^1{h(y)|P_{2,\theta}|^2dy}\approx (3^{m-n})^2[3^{n-m}+\sum_{j\neq k, j,k=1}^{3^{n-m}}{\hat{h}(\lambda_j-\lambda_k)}]\geq 3^{m-n}.$$

To show that this is not concentrated on $[0,3^{-m}]$, we will use Lemma \ref{CET}. We get $$\int_0^{3^{-m}}{|P_{2,\theta}(y)|^2dy}=3^{-m}\int_0^1{|P_{2,\theta}(3^{-m}y)|^2dy}=3^{-m}(3^{m-n})^2\int_0^1{|\sum_{j=1}^{3^{n-m}}{e^{i\lambda_j3^{-m}y}}|^2dy}.$$

Note that in this expression, the frequencies $\beta_j:=3^{-m}\lambda_j$ are the frequencies of $\widehat{f_{n-m,\theta}}$, but that they have been subjected to two changes of variables acting on $y$ by a cumulative factor of $3^{n-m}$. By the definition of $E_{K}$, $\beta_j$ can belong to a fixed unit interval for at most $K$ values of $j$. So the lemma tells us that $$\int_0^{3^{-m}}{|P_{2,\theta}|^2dy}\leq C3^{-m}(3^{m-n})^2K3^{n-m}\leq C\frac{3^{-m}K}{3^{n-m}}\leq\frac{1}{2}\int_0^1{|P_{2,\theta}(y)|^2dy},$$ if we introduce the assumption $3^m=CK$ for $C$ large enough. So now we have $\eqref{P2below}$.

\subsection{The estimate of $P_{2,\theta}$ on the set of small values of $P_{1,\theta}$}
\label{P2}

To get $\ref{totalest}$ from $\ref{P2below}$, we will show that a proportion of $\ref{P2below}$ must have come from outside of the $\textbf{set of small values}$ $SSV(\theta, \ell)$ of $P_{1,\theta}$, so that in $\ref{P2below}$, we may restrict the integration domain to the complement of $SSV(\theta,\ell)$ and bound $P_1$ by $3^{-A\ell m}$ from below.

Let $\ell= C_0\, m$, where the large absolute $C_0$ will be chosen later.

\noindent{\bf Definition.}
$$
SSV(\theta, \ell):=\{y\in [0,1]: |P_{1,\theta}(y)|< 3^{-A\,\ell\,m}\}\,,
$$
where $A$ is another large absolute constant to be seen in Section $\ref{complex}$.

This is the desired inequality:
\begin{equation}\label{final3}
\frac{1}{|E|}\int_E \int_{[3^{-m},1]\cap SSV(\theta,\ell)}|P_{2,\theta}(y)|^2\,dy\,d\theta\leq \frac{\varepsilon_0}{3^{n-m}}\leq 0.5\,\frac{1}{|E|}\int_E \int_{[3^{-m},1]}{|P_{2,\theta}(y)|^2\,dy\,d\theta},
\end{equation}
because it gives us
$$\frac{1}{|E|}\int_E \int_{[3^{-m},1]\setminus SSV(\theta,\ell)}|P_{2,\theta}(y)|^2\,dy\,d\theta\ge 0.5\,\frac{1}{|E|}\int_E \int_{[3^{-m},1]}|P_{2,\theta}(y)|^2\,dy\,d\theta\ge a\,\frac1{3^{n-m}}\,.$$

To get it, we will need to split $P_{2,\theta}(y)$ into two parts, $\Pshp$ and $\Pflt$, because Lemma $\ref{CET}$ applied to $\Pshp$ will get us part of the way there, and the claims of Section $\ref{complex}$ applied to $\Pflt$ will finish the estimate.

Introduce 
$$
\Pflt := \prod_{k=m+1}^{m+\ell/2}\phi_\theta(3^k y)\,,\, \Pshp:= \prod_{k=m+\ell/2+1}^n\phi_\theta(3^k y)\,.
$$
Also 
$$
R(x):=\prod_{k=m+1}^{m+\ell/2}\frac{7+ 2\cos (3^k x)}{9}\,.
$$

We will prove now
\begin{lemma}
\label{R1}
$|P_{2,\theta}^{\flat}(y)|^2 \le \min (R(y(c_2(\theta)-c_1(\theta))), R(y(c_3(\theta)-c_1(\theta))))\,.$
\end{lemma}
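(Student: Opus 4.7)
The plan is to reduce the product estimate to a pointwise inequality for each single factor $|\phi_\theta(3^k y)|^2$, since $|\Pflt|^2 = \prod_{k=m+1}^{m+\ell/2} |\phi_\theta(3^k y)|^2$ and $R(y(c_j-c_1)) = \prod_{k=m+1}^{m+\ell/2} \frac{7+2\cos((c_j-c_1)3^k y)}{9}$ are both products over the same index range. Thus it suffices to show, for any $t \in \R$ and for $j \in \{2,3\}$,
\[
|\phi_\theta(t)|^2 \;\le\; \frac{7+2\cos((c_j-c_1)t)}{9}.
\]

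To verify this, I would expand
\[
|\phi_\theta(t)|^2 \;=\; \frac{1}{9}\Bigl|\sum_{k=1}^3 e^{-i c_k t}\Bigr|^2 \;=\; \frac{1}{9}\bigl(3 + 2\cos((c_2-c_1)t) + 2\cos((c_3-c_1)t) + 2\cos((c_3-c_2)t)\bigr).
\]
The desired inequality (say for $j=2$) then reduces to
\[
3 + 2\cos((c_3-c_1)t) + 2\cos((c_3-c_2)t) \;\le\; 7,
\]
which is immediate from $|\cos|\le 1$. The case $j=3$ is symmetric: drop the $c_3-c_1$ cosine instead and use $|\cos((c_2-c_1)t)|,|\cos((c_3-c_2)t)|\le 1$.

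Taking the product of these pointwise bounds over $k=m+1,\dots,m+\ell/2$ yields $|\Pflt(y)|^2 \le R(y(c_2-c_1))$ and $|\Pflt(y)|^2 \le R(y(c_3-c_1))$ separately, and since the left-hand side does not depend on which bound we use, we may take the minimum on the right. There is really no obstacle here — the lemma is a clean consequence of the trivial inequality $\cos \alpha + \cos \beta \le 2$ applied factor by factor. The only thing to be careful about is that the factors $(7+2\cos(\cdot))/9$ are nonnegative (they lie in $[5/9,1]$), so multiplying the one-factor inequalities preserves the direction of the inequality.
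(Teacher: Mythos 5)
Your proposal is correct and follows essentially the same strategy as the paper: reduce to a per-factor pointwise bound $|\phi_\theta(t)|^2 \le \frac{7+2\cos((c_j-c_1)t)}{9}$ and then multiply, noting the factors are nonnegative. The paper arrives at the same per-factor inequality via the identity $|1+e^{it_1}+e^{it_2}|=|e^{-i(t_1+t_2)/2}+2\cos\frac{t_1-t_2}{2}|$ and the bound $|2\cos\alpha+e^{i\beta}|^2\le 7+2\cos(\alpha-\beta)$, but once expanded this is exactly your cosine computation, so the two arguments are the same in substance.
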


\begin{proof}
Notice that
$$
|2\cos \alpha + e^{i\beta}|^2 \le 7 + 2\cos(\alpha-\beta)
\,.
$$
In fact, 
$$
|2\cos \alpha + e^{i\beta}|^2=
 4 \cos^2\alpha + 1 + 4\cos\alpha\cos\beta \le 5 + 2\cos(\alpha +\beta) + 2 \cos(\alpha-\beta)\le
 $$
 $$
 7 + 2\cos(\alpha-\beta)\,.
 $$
 We can write
 $$
 |e^{ic_1 y} + e^{ic_2y} +e^{ic_3y}|^2 = |1 + e^{i(c_2-c_1)y} +e^{i(c_3-c_1)y}|^2 =:|1 + e^{it_1} +e^{it_2}|^2\,,
$$
where $t_1 :=(c_2-c_1)y, t_2:=(c_3-c_1)y$.
This is the same as $|e^{-i\frac{t_1+t_2}{2}} +2\cos (\frac{t_1-t_2}{2})|^2$. We use $\alpha=\frac{t_1-t_2}{2}, \beta=-\frac{t_1+t_2}{2}$, then $\alpha-\beta= t_1= (c_2-c_1)y$. By symmetry we could have used $\alpha+\beta=-t_2=
-(c_3-c_1)y$.

\end{proof}

For $y$ in $SSV$, we want $\Pflt$ to depend only on $\theta$, so that we may integrate it independently of $y$. In Section $\ref{complex},$ we will see that $SSV$ is contained in a union of small neighborhoods of the complex zeroes of $P_{1,\theta}$, and that the zeroes are in fact simple, depending differentiably on $\theta$. So we divide $SSV$ into the intersections of the neighborhoods of these zeroes with the real interval $[3^{-m},1]$. Lemma $\ref{R2}$ says that within one such interval, our Riesz products estimates on $\Pflt$ are absolutely comparable independent of $y$.

In the following, $j(s,k,t,\theta)$ will be a small interval containing $y_s(k,t,\theta)$. Point $y_s(k,t,\theta)$ will be  the real part of a complex zero of $P_{1,\theta}$. At any rate, the intervals $j(s,k,t,\theta)$ union over $s,k,t$ to give $SSV(\theta,\ell)$. (Roughly, $s$ tells us which factor of $P_1$ was zero, $k$ tells us which zero, and $t$ tells us which subinterval of length $3^{-Bm}$ $\theta$ belongs to. See Section \ref{complex} for details; one and the same zero of $\phi_\theta$ generates several intervals of smamllness of $P_1$.)

Roughly, 
$$
3^{-s}g_1(k,t,\theta)=y_s(k,t,\theta)(c_2(\theta)-c_1(\theta))\,,
$$
 and 
 $$
 3^{-s}g_2(k,t,\theta)=y_s(k,t,\theta)(c_3(\theta)-c_1(\theta))\,.
 $$

Later, we will see that there are a few pathological directions $\theta\in W$, which we will isolate in a small neighborhood $W_m$:

$$W:= \{\pi/2, 5\pi/6, \pi/6\}.\text{ Let }W_m:= \cup_{w\in W} B(w, 2\cdot 3^{-40m}).$$

Now we are going to estimate
$$
I:=\frac1{|E|}\int_E\int_{[0,1]\cap SSV(\theta,\ell)} |\Pflt|^2|\Pshp|^2\,dy\,d\theta\,.
$$
Recalling that $\frac1{|E|}=2K$ we have
$$
I \le 2K\int_{2W_m}\int_{[0,1]\cap SSV(\theta,\ell)}|\Pshp(y)|^2\,dy\,d\theta+ 
$$
$$
C_4\,K \sum_{t=1}^T\int_{J^t\cap E}|\Pflt|^2\int_{[0,1]\cap SSV(\theta,\ell)}|\Pshp|^2\,dy\,d\theta=: I_W +\sum_t I_t\,.
$$
Here we use the notations after Lemma \ref{SSVle}. We need to know now that $T\le 3^{Bm}$.
Now, $SSV(\theta,\ell) \subset \cup_{k=1}^{K(t)}\cup_{s=\kappa}^m j(s,k,t,\theta)$ (Lemma \ref{SSV3}).

Let us first estimate 
$$
\int_{j(s,k,t,\theta)} |\Pshp|^2 dy\,,\,\,\text{having in mind that}\,\, \theta\in E\,.
$$

Using that $\theta \in E$ and applying the Lemma \ref{CET} we get ($|c_j|=1$ appears because of the change of variable $[0,1]\rightarrow j(s,k,t,\theta)$, $ y= 3^{-s-\ell}x +b$, $c_j= e^{i\lambda_j' b}$)
$$
\int_{j(s,k,t,\theta)}{|\Pshp|^2dy}=(3^{m+\ell/2-n})^2\int_{j(s,k,t,\theta)}{|\sum_{j=1}^{3^{n-m-\ell/2}}{e^{i\lambda_j'y}}|^2dy}\leq
$$
$$ 
C(3^{m+\ell/2-n})^23^{-s-\ell}\int_0^1{|\sum_{j=1}^{3^{n-m-\ell/2}}{c_j\,e^{i\widetilde{\lambda_j'}x}}|^2dx}
$$
$$
\leq C\frac{3^{-m-l/2}}{(3^{n-m-l/2})^2}K\cdot 3^{n-m-l/2}\leq \frac{CK3^{-m}}{3^{n-m}}\,,
$$
because in each {\bf unit} interval we have at most $K\, (3^{m+\ell/2}3^{-s-\ell})^{-1}$ frequencies $\widetilde{\lambda_j'}$.

Therefore, recalling that number of intervals is bounded by constant times $3^m$ we get for every $\theta\in E$
$$
\int_{SSV(\theta,\ell)} |\Pshp|^2 dy \le \frac{C_7\,K}{3^{n-m}}\,.
$$
Using the fact that $|2W_m|\le C_8 e^{-40m}$ we estimate
$$
I_W = o(\frac1{3^{n-m}})\,,\,\text{so}\,\, I_W= o(\int_{[3^{-m},1]} |P_{2,\theta}(y)|^2\,dy)\,.
$$

Recall Lemma \ref{SSVle} and the notion of $\kappa$ after it to formulate
\begin{lemma}
\label{SSV31}
If $\theta\in J^t$ then $SSV(\theta,\ell) \subset \cup_{k=1}^{K(t)}\cup_{s=\kappa}^m j(s,k,t,\theta)\,.$
\end{lemma}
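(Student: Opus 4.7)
The plan is to deduce this from the pigeonhole principle on the product defining $P_{1,\theta}$ combined with the complex-analytic description of the zeros of $\phi_\theta$ developed in Section \ref{complex}, localized to the fixed subinterval $J^t$. First, if $y\in SSV(\theta,\ell)$ then by definition $|P_{1,\theta}(y)|<3^{-A\ell m}$, and since $P_{1,\theta}(y)=\prod_{s=0}^{m}\phi_\theta(3^{s}y)$ is a product of $m+1$ factors of modulus at most $1$, at least one index $s$ must satisfy $|\phi_\theta(3^{s}y)|<3^{-A\ell m/(m+1)}$, which for $\ell=C_0 m$ forces $3^s y$ to lie in the real trace of the small-value set of the entire function $\phi_\theta$.

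Next I would invoke the results of Section \ref{complex} (i.e., the structure established in Lemmas \ref{SSVle} and \ref{SSV3}) to say that for $\theta\notin W_m$ the zeros of $\phi_\theta$ are simple and that the real points where $|\phi_\theta|$ is small are contained in a finite union of small real neighborhoods of the real parts $\xi_k(\theta)$ of the complex zeros. The zeros move smoothly in $\theta$ with controlled derivative; hence on the interval $J^t$ of length $3^{-Bm}$ the function $\xi_k$ oscillates by an amount which can be absorbed into the defining neighborhood. Fixing a reference value in $J^t$ lets us define $y_s(k,t,\theta):=3^{-s}\xi_k(\theta)$ and the neighborhood $j(s,k,t,\theta)$, and then the factor $\phi_\theta(3^s y)$ being small forces $y\in j(s,k,t,\theta)$ for some $k$. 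The index $k$ only needs to range over the finite set $\{1,\dots,K(t)\}$ of zeros of $\phi_\theta$ falling in the relevant range $[0,3^s]$, which is controlled uniformly for $\theta\in J^t$.

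The lower cutoff $s\geq \kappa$ comes from the fact that $\phi_\theta(0)=1$, so by continuity and the removal of the pathological set $W_m$, the zeros of $\phi_\theta$ are bounded below in modulus by an absolute positive constant; for $y\in[0,1]$ to place $3^s y$ near any zero, $3^s$ must exceed this threshold, giving the universal index $\kappa$ after which zeros can first contribute. Combining the three observations, every $y\in SSV(\theta,\ell)$ belongs to $j(s,k,t,\theta)$ for some $s\in\{\kappa,\dots,m\}$ and $k\in\{1,\dots,K(t)\}$, proving the inclusion.

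The main obstacle I expect is not in this lemma itself—which is essentially a bookkeeping restatement of Lemma \ref{SSV3} with the added localization parameter $t$—but in checking that the same enumeration of zeros $k\in\{1,\dots,K(t)\}$ works uniformly for every $\theta\in J^t$. That uniformity is exactly what the choice of scale $3^{-Bm}$ for the $J^t$'s is engineered to provide, via the derivative bounds on $\xi_k(\theta)$ supplied in Section \ref{complex}; once those bounds are in hand, the rest is the pigeonhole step and standard enumeration of zeros in a bounded region.
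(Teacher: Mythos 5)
Your proposal follows the paper's own route: the statement is just a localized restatement of Lemma~\ref{SSV3}, obtained by a factor-by-factor pigeonhole on $P_{1,\theta}$, containment of the small-value set of $\phi_\theta$ in discs around zeros (Lemma~\ref{schke2}), rescaling by $3^{-s}$, and a uniform enumeration of the analytic zero branches over the interval $J^t$. One tiny imprecision worth noting: $\kappa$ in the paper is defined per zero, namely as the scale for which $Y(k,t,\theta)\in(3^{\kappa-1},3^\kappa]$, rather than a single universal threshold, though this does not affect the logic.
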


Let us now estimate $I_t =C\,K\int_{J^t\cap E}\int_{SSV(\theta,\ell)}...$. When we fix $t$ we use Theorem \ref{conclusion}. Then either on $J^t$ as a whole, or on one of the subdivision intervals $J^t_u, u=1,...,U\le B_0\,m$ we have for each fixed $k=1,...K(t)$
\begin{equation}
\label{determinant}
|g_1'(k,t,\theta|\ge a_2\,\delta_0
\end{equation} or the same happens with $g_2(k,t,\theta)$ on the whole $J^t_u$. 

Then again exactly as before, by using $\theta\in E$ and the Carleson Embedding Theorem, we get
$$
\int_{SSV(\theta,\ell)} |\Pshp|^2 dy \le \frac{C_7\,K}{3^{n-m}}\,.
$$

 Now,
$$
\int_{J^t_u\cap E}\int_{j(s,k,t,\theta)}|P^{\flat}_{2,\theta}(y)|^2|\Pshp|^2\,dy\,d\theta\le 
$$
$$
\int_{J^t_u\cap E}R(y_s(k,t,\theta)(c_2(\theta)-c_1(\theta)))|\int_{j(s,k,t,\theta)}\Pshp|^2\,dy\,d\theta \,,\, s=\kappa,...,m\,.
$$
Here we are using Lemmas \ref{R1} and \ref{R2}.

And now we need to estimate only
$$
\int_{J^t_u}R(3^{-s}g_1(k,t,\theta))\,d\theta\,.
$$
Notice that we we throw away $\cap E$ at this stage.

We change the variable $v=3^{-s}g_1(k,t,\theta), \theta\in J^t_u$, and notice that this is a monotone change of variable and (see Theorem \ref{conclusion}) 
$$
\bigg|\frac{\partial v}{\partial\theta}(\theta)\bigg|\ge a_2\delta_0\cdot 3^{-m}\,.
$$

Then a Riesz product observation shows
$$
\int_{J^t_u}R(3^{-s}g_1(k,t,\theta))\,d\theta \le (a_2\delta_0\cdot 3^{-m})^{-1}\int_0^{2\pi} \prod_{k=m+1}^{m+\ell/2}\frac{7+2\cos (3^k\,v)}{9} dv \le
C\, 3^m\,\bigg(\frac79\bigg)^{\ell/2}\,.
$$

We already proved ($\theta\in E\cap J^t_u$)
$$
\int_{SSV(\theta,\ell)} |\Pshp|^2 dy \le \frac{C_7\,K}{3^{n-m}}\,.
$$

Therefore,
\begin{equation}
\label{Jt}
I_t \le 3^m\,K\,U\,\bigg(\frac79\bigg)^{\ell/2}\,\frac{C_7\,K}{3^{n-m}}\,,
\end{equation}

Gathering the estimate $U\le B_0\,m$, the estimate for $I_W$, and the estimate \eqref{Jt} together we obtain
by  recalling that $K=3^m/R$, where $R$ is a large absolute constant:
$$
I:=\frac1{|E|}\int_E\int_{[0,1]\cap SSV(\theta,\ell)} |\Pflt|^2|\Pshp|^2\,dy\,d\theta \le 
$$
\begin{equation}
\label{final1}
3^m\,K(K 3^{-40m} +
B_0\,3^{B\,m}\,m\,\bigg(\frac79\bigg)^{\ell/2})\frac1{3^{n-m}}\,.
\end{equation}
If we choose
$$
\ell= 100\,B\,m\,,
$$ we get from \eqref{final1} that for every $\theta$

\begin{equation}
\label{final2}
I=o(\frac1{3^{n-m}})\,,\,I= o(\int_{[3^{-m}, 1]} |\Pflt|^2|\Pshp|^2\,dy\,.
\end{equation}

Therefore
$$\frac{1}{|E|}\int_E \int_{[0,1]\setminus SSV(\theta,\ell)}|P_{2,\theta}(y)|^2\,dy\,d\theta\ge 0.5\,\frac{1}{|E|}\int_E \int_{[3^{-m},1]}|P_{2,\theta}(y)|^2\,dy\,d\theta\ge a\,\frac1{3^{n-m}}\,.$$

On the other hand on $[0,1]\setminus SSV(\theta,\ell)$ we have $|\Pa|\ge 3^{-A\,\ell\, m}= 3^{-100AB\, m^2}$
Now we can write
$$
C\frac{Km}{N}\geq 3^n\int_{3^{-m}}^1{|\Pa|^2|\Pb|^2dy}\geq C'3^n\,3^{-100AB\, m^2}\,3^{m-n}\geq C'\frac{3^{m}}{3^{100AB\, m^2}},
$$ 
i.e., 
$$
m\geq C''\frac{N}{3^{100AB\, m^2}}
$$ 
which implies the contradiction if we choose $m=\eps_0\sqrt{\log N}$ with sufficiently small $\eps_0$, for example,
$\eps_0 = \frac12 \sqrt{\frac1{100AB}}$.

Therefore, 
$$
K\le R\,e^{\eps_0\sqrt{\log N}}
$$
brings the contradiction to
$$
1/|E| = 2K\,,
$$ and, hence, 
\begin{equation}
\label{main2}
|E|= 1/2K \le C\,e^{-2\eps_0\sqrt{\log N}}\,.
\end{equation}

\bigskip

Recall that $E$ was the set of singular directions, on which we do not have overlap of $K$ or larger number of triangles of size $\le 3^{-N}$. Any other direction $\theta$ will be in the set of good directions. Such an overlap will happen and (see  Section \ref{combinatorics})

\begin{equation}
\label{gooddir}
|\mathcal{L}_{\theta, N\,3^N}|\le \frac{C}{K}\,.
\end{equation}

So we proved

\begin{theorem}
\label{mainth}
$$
\int_0^{\pi}|\mathcal{L}_{\theta, n}|\,d\theta \le C\, e^{-c\,\sqrt{\log \log n}}\le \frac{C}{\log\log n}\,.
$$
\end{theorem}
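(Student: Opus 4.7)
The plan is to combine the measure estimate \eqref{main2} for the set $E$ of singular directions with the overlap-based good-direction bound \eqref{gooddir}, and then convert the Fourier-side parameter $N$ into the statement's index $n$. The analytic heavy lifting was done in Sections \ref{sec:fourier}--\ref{combinatorics}, so this final step is essentially bookkeeping.

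First I would fix $N$ and the corresponding choice $K = R\,e^{\eps_0\sqrt{\log N}}$ as above, set $n^{*} := N\cdot 3^N$, and split
\begin{equation*}
\int_0^\pi |\mathcal{L}_{\theta, n^{*}}|\,d\theta = \int_E |\mathcal{L}_{\theta, n^{*}}|\,d\theta + \int_{[0,\pi]\setminus E} |\mathcal{L}_{\theta, n^{*}}|\,d\theta.
\end{equation*}
On the bad set $E$, the trivial bound $|\mathcal{L}_{\theta,n^{*}}| \le \mathrm{diam}(\G_0) \le C$ combined with $|E| \le 1/(2K)$ from \eqref{main2} gives a contribution of order $1/K$. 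On its complement, \eqref{gooddir} gives the pointwise bound $|\mathcal{L}_{\theta,n^{*}}| \le C/K$, which after integration over a set of measure at most $\pi$ contributes another term of order $1/K$. Together, the Favard integral at level $n^{*}$ is bounded by a constant times $1/K = R^{-1} e^{-\eps_0\sqrt{\log N}}$.

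The remaining step is to re-express this bound for arbitrary $n$. Since $\G_n \subset \G_m$ for $n\ge m$, the projections and their Lebesgue measures are monotone non-increasing in the level; given $n$, I would pick the largest $N$ with $n^{*} = N\cdot 3^N \le n$, so that the bound at level $n^{*}$ controls the integral at level $n$ as well. For this $N$, $\log n = N\log 3 + \log N$ and hence $\log\log n = \log N + O(1)$, giving $\sqrt{\log N} \ge c\sqrt{\log\log n}$ for an absolute $c>0$; this yields $\int_0^\pi |\mathcal{L}_{\theta,n}|\,d\theta \le Ce^{-c\sqrt{\log\log n}}$. The second inequality $Ce^{-c\sqrt{\log\log n}} \le C/\log\log n$ is the elementary observation that $c\sqrt{x} \ge \log x$ for large $x$.

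The only part that requires genuine attention -- rather than assembly of previously established bounds -- is verifying that the combinatorial estimate \eqref{gooddir}, whose proof is the subject of Section \ref{combinatorics}, really is the correct bridge from the $L^\infty$ non-overlap failure at scales $n\le N$ (the defining property of the complement of $E$) to a Favard-length estimate at the scale $n^{*} = N\cdot 3^N$; once that bridge is in place, the theorem follows from the two lines of manipulation above.
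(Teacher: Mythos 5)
Your proposal is correct and follows exactly the route the paper intends: split the Favard integral over $E$ and its complement, use the trivial diameter bound together with \eqref{main2} on $E$ and the pointwise estimate \eqref{gooddir} on the complement, and then reparametrize $n^* = N\cdot 3^N$ to convert $\sqrt{\log N}$ into $\sqrt{\log\log n}$. The paper is terse at this stage; you have simply filled in the bookkeeping it leaves implicit.
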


\section{Combinatorial part}
\label{combinatorics}

To prove \eqref{gooddir} let us assume that a projection on direction $\theta$ has the overlap of $Q_1,..., Q_K$, where these $Q_j$ are triangles of size $3^{-n}, n\le N$. We cal the the collection of all $3^{-s}$ tringles by $T_s$.
If we project $T_n$ we get ($C\ge 1$ is an absolute constant)
$$
|proj \,T_n| \le C\,3^{-n} + 3^{-n} (3^n-K)\,.
$$

The first term estimates $|proj (Q_1\cup...\cup Q_K)|$ and the second term is just the sum of projections of the $Q_{K+1},...,Q_{3^n}$. The estimate is not impressive, it is close to $1$, not to $0$, but let us iterate it. Inside
each of $Q_{K+1},...,Q_{3^n}$ there is a stack of tringles $Q_{j1},...,Q_{jK}$ which are $3^{-n}$-dilated copies of $Q_1,..., Q_K$ and which will overlap when projected on $\theta$-direction.  Therefore the second term can be improved: it becomes $(C\,3^{-2n} + 3^{-2n} (3^n-K))(3^n-K)$. So we get
\begin{equation}
\label{iteration1}
|proj\, T_{2n}| \le C\,3^{-n} + C\,3^{-2n}(3^n-K) + 3^{-2n} (3^n-K)^2\,.
\end{equation}
Next iteration of the same self-similarity observation gives

\begin{equation}
\label{iteration2}
|proj \,T_{3n}| \le C\,3^{-n} + C\,3^{-2n}(3^n-K)+ C\,3^{-3n}(3^n-K)^2 + 3^{-3n} (3^n-K)^3\,.
\end{equation}

After $X$ iterations

\begin{equation}
\label{iteration3}
|proj \,T_{Xn}| \le C\,\sum_{k=1}^X 3^{-kn}(3^n-K)^{k-1} + 3^{-Xn} (3^n-K)^X \le \frac{C\, 3^{-n}}{1-(1-K \,3^{-n})} + e^{-K3^{-n}X}\,.
\end{equation}
Therefore, if $X\ge 3^n \frac{\log K}{K}$ (for example, $X=3^n$)

\begin{equation}
\label{iteration4}
|proj \,T_{Xn}| \le  \frac{2C}{K}\,.
\end{equation}

But then
$$
\mathcal{L}_{\theta, N3^N} \le \mathcal{L}_{\theta, n3^n}  = |proj \,T_{n3^n}| \le  \frac{2C}{K}\,,
$$
and \eqref{gooddir} is proved.

\section{The complex analytic part}
\label{complex}

\subsection{Zeros of $\varphi_\theta(z)$}
\label{entire}

In this section (up to factor $3$ from before) $\varphi_\theta(z) := e^{-ic_1z} + e^{-ic_2z} + e^{-ic_3z}$, where $c_1=\cos (\theta-\frac\pi{2})\,,\,c_2=\cos (\theta-\frac{7\pi}{6})\,,\,c_3=\cos (\theta+\frac{\pi}{6})$. We need to know how the zeros are separated and how they behave with changing of $\theta\in [0,2\pi)$.

Notice that there are three sectors $S_1, S_2, S_3$ such that, say, $c_1\ge a$ ($a$ is an absolute positive constant) in $S_1$ and $c_2,c_3 <0$ in $S_1$ (and similarly for other sectors). Sectors have apperture $\pi/3$ each, and are symmetric with respect to rays $\pi/2$, $7\pi/6$, and $-\pi/6$ correspondingly. If, say, $e^{i\theta}\in S_1$ we get that for $z=x+iy$ with $y\ge H=H(a)$, $|\varphi_\theta(z)|\ge 1$. The same for other sectors, so always  if $\theta\in S_1\cup S_2\cup S_3$ we have
\begin{equation}
\label{H}
|\varphi_\theta(x+ iH)|\ge 1\,.
\end{equation}
If we happen to be in $e^{i\theta}\in - S_1$ then $c_2,c_3 \ge 0$, and $c_1<-a$. Then, 
\begin{equation}
\label{H-}
|\varphi_\theta(x- iH)|\ge 1\,.
\end{equation}
Similarly, we could have reasoned that $\varphi_\theta(-z) =\varphi_{\theta+\pi} (z)$. Note also that $|\varphi|\leq C(H)$ when $\Im(z)\leq H$ (where $C(H)$ is a constant depending on $H$).

Every rectangle $B:=[x_0-1,x_0+1]\times [-H,H]$ will be called a box. Because of Lemmas $\ref{schke1}$, $\ref{schke2}$, and $\ref{schke3}$, we may say the following:

In every box we have at most absolute constant $M$ of zeros of $\varphi_\theta(z)$ uniformly in $\theta\in [0,2\pi)$.

For a certain uniform in $\theta$ absolute constant $\eta>0$ we have
\begin{equation}
\label{disks}
\{z: \Im z\in (-H,H): |\varphi_\theta (z)|<\delta\} \subset \cup_{\lambda_i} D(\lambda_i, \delta^{\eta})\,.
\end{equation}
Here $\{\lambda_i=\lambda_i(\theta)\}$ are zeros of $\varphi_\theta$.

Notice also that uniformly in $\theta$ for all sufficiently large $m$
\begin{equation}
\label{number}
|\{i: |\lambda_i|\le 3^m|\} \le C\,3^m\,.
\end{equation}

The constant $C$ is absolute and uniform in $\theta$. (This last fact could have also been obtained from the theory of entire functions with growth conditions.)

\bigskip

\subsection{Zeros of $\varphi_{\zeta}(z)$}
\label{zeta}

Now $\zeta=\theta +i\sigma$, $\varphi_{\zeta}(z) = e^{-ic_1(\zeta) z}+ e^{-ic_2(\zeta) z}+e^{-ic_3(\zeta) z}$.

Recall that $c_1(\zeta)=\cos (\zeta-\frac\pi{2})\,,\,c_2=\cos (\zeta-\frac{7\pi}{6})\,,\,c_3=\cos (\zeta+\frac{\pi}{6})$,  $c_1=\cos (\theta-\frac\pi{2})\,,\,c_2=\cos (\theta-\frac{7\pi}{6})\,,\,c_3=\cos (\theta+\frac{\pi}{6})$,and $s_1=\sin (\theta-\frac\pi{2})\,,\,s_2=\sin (\theta-\frac{7\pi}{6})\,,\,s_3=\sin (\theta+\frac{\pi}{6})$. Then we write ($\zeta=\theta+i\sigma$)
$$
\varphi_{\zeta}(z) = e^{-ic_1\ch\sigma \,(x+iy)} e^{-s_1\sh\sigma \,(x+iy)} + e^{-ic_2\ch\sigma \,(x+iy)} e^{-s_2\sh\sigma \,(x+iy)}+
$$
$$
e^{-ic_3\ch\sigma \,(x+iy)} e^{-s_3\sh\sigma \,(x+iy)}\,.
$$
Fix 
$$
Q= \{\zeta: -\pi/2<\theta< 5\pi/2, -\sigma_0<\sigma<\sigma_0\}\,.
$$
where $\sigma_0$ is a small positive absolute constant.
\begin{lemma}
\label{M}
There is an absolute constant $M$ such that for any $\zeta\in Q$ and for any box $B_{x_0}, x_0>0,$
\begin{equation}
\label{eqM}
\text{card}(\lambda\in B: \varphi_{\zeta}(\lambda) =0) \le M\,.
\end{equation}
\end{lemma}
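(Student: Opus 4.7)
Following the schema of Section \ref{entire}, I will apply Jensen's inequality to $\varphi_\zeta$ on a disk of absolute radius covering the box $B=B_{x_0}$, combining an upper bound on the outer circle with a lower bound at a carefully chosen interior reference point. The only genuinely new feature compared with the real-$\theta$ case is that $|e^{-ic_j(\zeta)z}|$ acquires an $x_0$-dependent exponential factor when $\sigma\neq 0$, and this factor must cancel identically between the two bounds.

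Write $z = x + iy$ and set
$$L_j(x,y) := c_j\,\ch\sigma\cdot y - s_j\,\sh\sigma\cdot x,\qquad j=1,2,3,$$
so that $|e^{-ic_j(\zeta)z}| = e^{L_j(x,y)}$. On any disk $D(z^*, R)$ with $z^* = x_0 + iy^*$ and $R$ absolute, the triangle inequality together with $|c_j(\zeta)|,|s_j(\zeta)|\le \ch\sigma_0+\sh\sigma_0$ immediately give $\sup_{|z-z^*|\le R}|\varphi_\zeta(z)|\le 3\exp(\max_j L_j(z^*) + CR)$ for an absolute $C$.

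The key step will be to produce, for every $(\zeta,x_0)\in Q\times(0,\infty)$, a reference point $z^* = x_0 + iy^*$ with $|y^*|\in[H,2H]$ and an index $j^*$ satisfying
$$L_{j^*}(z^*) \ge \max_{k\neq j^*} L_k(z^*) + \delta_0$$
for an absolute $\delta_0>0$. Such dominance will yield $|\varphi_\zeta(z^*)|\ge (1-2e^{-\delta_0})e^{L_{j^*}(z^*)} =: c_\ast\,e^{L_{j^*}(z^*)}$, and because $L_{j^*}(z^*) = \max_j L_j(z^*)$, the $x_0$-dependent piece will cancel against the upper bound above. Existence of $(j^*,y^*)$ will be verified by a short case analysis: when $\sigma=0$ one of $y^*=\pm H$ together with the sector-dominated index works, exactly as in \eqref{H}--\eqref{H-}; even at $\theta\in W$ the two coinciding coefficients $c_j=c_k$ merge two of the three exponentials into a single term with coefficient $2$, and the remaining exponent produces dominance. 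For $\sigma\neq 0$ the ``tie loci'' $\{L_j = L_k\}$ are lines in the $(x,y)$-plane with slopes of order $\tanh\sigma$, so tilting $y^*$ inside $[H,2H]$ avoids at most three bad values and restores the margin.

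With $z^*$ in hand, Jensen's formula on $D(z^*,R)$ with, say, $R=6H$ and intermediate radius $R'=3H$ (which still encloses $B$) gives
$$n(B)\,\log\tfrac{R}{R'} \le \frac{1}{2\pi}\!\int_0^{2\pi}\!\log|\varphi_\zeta(z^*+Re^{i\phi})|\,d\phi - \log|\varphi_\zeta(z^*)|.$$
The two $x_0$-dependent contributions $\max_j L_j(z^*) = L_{j^*}(z^*)$ cancel exactly, leaving an absolute constant $M := (\log 2)^{-1}(\log 3 + CR - \log c_\ast)$. The hard part will be the margin claim: extracting a uniform $\delta_0$ that survives both the passage of $\zeta$ near the three bad real directions in $W$ and the passage of $x_0$ through transitional values at which the dominant pair of $L_j$'s switches. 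All such situations are handled by the ``tilt $y^*$ within $[H,2H]$'' device together with the merging observation at $\sigma=0$, $\theta\in W$.
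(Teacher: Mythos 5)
Your overall strategy is the same as the paper's: locate a reference point $z^*$ near $B_{x_0}$ at which $|\varphi_\zeta(z^*)|$ is bounded below by an absolute multiple of $\sup_B|\varphi_\zeta|$, then apply a Jensen/Blaschke-type counting bound (your Jensen computation is exactly what the paper packages as Lemma~\ref{schke1} and Lemma~\ref{schke3}). The $L_j$ bookkeeping is a tidy way to organize the exponents. So the reduction to the ``margin claim'' is the same reduction the paper makes when it says that it suffices to produce $w\in B_{x_0}$ with $|\varphi_\zeta(w)|\ge c_0$ (resp.\ $c_0 e^{s_i\sh\sigma\,x_0}$) while $\sup_{2B_{x_0}}|\varphi_\zeta|\le C_0$ (resp.\ $C_0 e^{s_i\sh\sigma\,x_0}$).

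The gap is that you have not proved the margin claim; you have only asserted that it follows from ``tilt $y^*$ within $[H,2H]$'' together with ``merging at $\theta\in W$.'' But that claim \emph{is} Lemma~\ref{M}: once it is in hand, the Jensen step is routine. The paper spends essentially the whole proof on exactly this point, via a genuine case analysis: first splitting on whether $x_0\,\sh\sigma\le 10$ (where one reduces to the real-$\theta$ estimates \eqref{H}, \eqref{H-}) or $x_0\,\sh\sigma\ge 10$, and in the latter case splitting on how many of the $s_j$ are positive and on whether the two largest are separated by $1/10$ (Cases 1--3), with the delicate Case~3 requiring a tilt to $w=x_0\pm ih$, $h=3$, and using the structural facts $|c_1|,|c_2|\ge 1/3$, opposite signs, and $s_3\le -\sqrt 3/2$. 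Your sketch gestures at the right devices, and I believe the claim is true, but as written it does not constitute a proof: you never verify that a single $y^*\in\pm[H,2H]$ can be chosen so that the \emph{same} index $j^*$ wins both pairwise comparisons with an absolute margin, which is precisely where the interaction between the $y$-slopes $c_j\ch\sigma$ and the $x_0$-offsets $-s_j\sh\sigma\,x_0$ (and the identity $(c_j-c_k)^2+(s_j-s_k)^2=3$) must be used carefully. In particular the invocation of ``merging'' is a red herring here: if $c_j(\zeta)$ and $c_k(\zeta)$ merely nearly coincide, the two terms $e^{-ic_j(\zeta)z}$ and $e^{-ic_k(\zeta)z}$ have an $x_0$-dependent phase difference of size roughly $|c_j(\zeta)-c_k(\zeta)|\,x_0$, which can be $O(1)$ or larger since $x_0$ is unbounded, so one cannot simply treat the sum as $2e^{-ic_j(\zeta)z}$; what actually saves the day in that regime is that choosing the opposite sign of $y^*$ makes the third line dominate both of the near-coincident ones, which is a tilt argument, not a merging argument, and it still needs to be carried out. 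A minor technical slip is that with $|y^*|\in[H,2H]$ the intermediate radius $R'=3H$ does not cover the box corner at imaginary part $-H$ when $y^*\approx 2H$ (distance $\sqrt{1+9H^2}>3H$); take $R'$ a bit larger, e.g.\ $R'=4H$ and $R=8H$.
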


\begin{proof}
This is again Lemma $\ref{schke1}$. We want $|\varphi|$ to have an upper bound on the boundary of a box, and a lower bound at a point inside the box.

Consider first the case of $\sigma\ge 0$. In a box $B_{x_0}$ we have the estimate from above
$$
|\varphi_{\zeta} (z) | \le C_0\,,
$$ 
if $x_0\,\sh\sigma \le 10$. If $x_0\,\sh\sigma \ge 10$ we have
$$
|\varphi_{\zeta} (z) | \le C_0\, e^{s_i\sh\sigma\, x_0}
$$ for some $i=1,2,3$.
We want to prove that the box contains a point $w$ such that
$$
|\varphi_{\zeta} (w) | \ge c_0\,,
$$ 
if $x_0\,\sh\sigma \le 10$. If $x_0\,\sh\sigma \ge 10$ we will have
$$
|\varphi_{\zeta} (w) | \ge c_0\, e^{s_i\sh\sigma\, x_0}
$$ for the same $i=1,2,3$ as above.
The case $x_0\,\sh\sigma \le 10$ is treated exactly as in \eqref{H}, \eqref{H-}, and as a result, we find $w= x_0\pm iH$ satisfying $|\varphi_{\zeta} (w) | \ge c_1$.  May be only $H$ should be chosen bigger, but its size is not dependent on anything (except number $10$).

If $x_0\,\sh\sigma \ge 10$, consider several cases. Real line is split to intervals of length $\pi$, we call such interval positive if $\sin$ on it is positive. Notice that given a positive interval only one or two of $\theta-\pi/2, \theta-7\pi/6, \theta +\pi/6$  belongs to it $mod \,2\pi$.  

Case 1 is when only one, say, $\theta-\pi/2$ belongs to a positive interval $mod \,2\pi$. Notice that then $s_1 \ge \sqrt{3}/2$, $s_2, s_3 \le 0$, $\sh\sigma \,x_0\ge 10$, and as a result 
$$
|\varphi_{\zeta} (x_0)| \ge  e^{s_1\sh\sigma\, x_0} - 2 \ge \frac12 e^{s_1\sh\sigma\, x_0}\,.
$$

Case 2 is when both elements of a pair, say $s_1, s_2$, are non-negative, $s_3 \le 0$. Consider the situation when $s_1 > s_2 +1/10$. Then
it is easy to see that $s_1\ge 1/2$, and then
$$
|\varphi_{\zeta} (x_0)| \ge  e^{s_1\sh\sigma\, x_0}( 1  -  e^{-1/10\sh\sigma\, x_0}) -1 \ge 
$$
$$
 (1-1/e)e^{s_1\sh\sigma\, x_0} -e^{-5} e^{s_1\sh\sigma\, x_0} \ge c_0 e^{s_1\sh\sigma\, x_0}\,.
$$

We are left to consider the situation when $s_1, s_2$ are non-negative, $s_3 \le 0$, and $s_2 \le s_1 \le s_2 +1/10$.
Notice that in this case  $|c_1|, |c_2| \ge 1/3$, $s_1 \ge 1/2$.  Suppose $c_1>0$, choose $h>0$ such that $e^{c_1 h} \ge 2$. It is enough to choose $h=3$. Consider $w= x_0 +ih$. If there would be $c_1 <0$ (and so $c_1<-1/3$) we would choose $w=x_0-ih$.  In both cases, $c_2$ has an opposite sign, and we can notice that $s_3 \le -\sqrt{3}/2$. Hence,

$$
|\varphi_{\zeta}(w)| \ge 2\,e^{s_1\sh\sigma\, x_0}  - e^{s_2\sh\sigma\, x_0} - e^{|c_3|\ch\sigma_0\, h} e^{-\frac{\sqrt{3}}{2}\sh\sigma\, x_0}\,.
$$
Using the facts that $\sigma_0\le 1/10$, $\sh\sigma\, x_0\ge 10$, $h=3$, $s_2\le s_1$, $s_1\ge 1/2$, we get from the above:
$$
|\varphi_{\zeta}(w)| \ge e^{s_1\sh\sigma\, x_0}-e^{6}e^{-\frac{\sqrt{3}}{2}\sh\sigma\, x_0}\ge \frac12\,e^{s_1\sh\sigma\, x_0}\,.
$$

We finished to consider the case of $\sigma\ge 0$, the case $\sigma<0$ is taken care of in a symmetric fashion.

\bigskip

To finish the estimate \eqref{eqM} we notice that in a  doubled box $2\,B_{x_0}$ we have the estimate from above
\begin{equation}
\label{above1}
|\varphi_{\zeta} (z) | \le C_0\,,
\end{equation} 
if $x_0\,\sh\sigma \le 10$. If $x_0\,\sh\sigma \ge 10$ we have
\begin{equation}
\label{above2}
|\varphi_{\zeta} (z) | \le C_0\, e^{s_i\sh\sigma\, x_0}
\end{equation} for  $i=1,2,3$ for which $s_i$ was the largest.
We have already proved that the box $B_{x_0}$ with sufficiently large absolute $H\ge 3$ contains a point $w$ such that
\begin{equation}
\label{below1}
|\varphi_{\zeta} (w) | \ge c_0\,,
\end{equation}
if $x_0\,\sh\sigma \le 10$. If $x_0\,\sh\sigma \ge 10$ we proved the existence of $w\in B_{x_0}$ such that
\begin{equation}
\label{below2}
|\varphi_{\zeta} (w) | \ge c_0\, e^{s_i\sh\sigma\, x_0}
\end{equation} for the same $i=1,2,3$. Here $c_0, C_0$ are absolute constants.

Lemma $\ref{schke1}$ now applies to all of our cases.
\end{proof}

 \bigskip
 
\subsection{The set of small values of $\Pa$}
\label{small values}

In this section we want to investigate the set
$$
\mathcal{G}:=\{y\in [3^{-m}, 1]: |\Pa(y)|=|\varphi_{\theta}(y)\varphi_{\theta}(3y)\cdot...\cdot\varphi_{\theta}(3^my)| \ge 3^{-A\,m\ell}\}\,.
$$
If $\Omega (k, \theta, 3^{-A\ell}):= \{y\in [3^{-m}, 1]: |\varphi_{\theta}(3^ky)| < 3^{-A\ell}\}$, then
the set of small values 
$$
\Omega (\theta, \ell) = [3^{-m}, 1]\setminus \mathcal{G} \subset\cup_{k=0}^m \Omega (k, \theta, 3^{-A\ell})\,.
$$

We already saw that if $A\ge 1/\eta$

$$
\{ z=x+iy, 0<x< 3^m, -H<y<H: |\varphi_{\theta}(z)| < 3^{-A\ell} \subset \cup_i D(\lambda_i(\theta), 3^{-\ell})\}\,,
$$
where $\lambda_i(\theta)$ are zeros of $\varphi_{\theta}(z)$. In proving this we essentially used only the absolute bound on the number od zeros of $\varphi_{\theta}$ in the box, and the fact that each box has a point where $|\varphi_{\theta}(w)|$ is comparable with the $\max |\varphi_{\theta}|$ over the box.

But the same is formulated for $\varphi_{\zeta}(z)$, $\zeta\in Q$, in Lemma \ref{M} and in \eqref{above1}, \eqref{below1}, \eqref{above2}, \eqref{below2},
 so we trapped the set of small values of $\varphi_{\zeta}$ in the collection of discs:
 $$
\{ z=x+iy, 0<x< 3^m, -H<y<H: |\varphi_{\zeta}(z)| < 3^{-A\ell} \subset \cup_i D(\lambda_i(\zeta), 3^{-\ell})\}\,,
$$
where $\lambda_i(\zeta)$ are zeros of $\varphi_{\zeta}(z)$.

\bigskip

Now we want to show that $\Pb$ is still large enough away from the set where $\Pa$ is small. Let $\OTL :=\lbrace y\in [3^{-m},1]:|\Pa|\leq C3^{-A\,\ell\,m}\rbrace$. Then $\OTL$ is contained  in  contractions by the factors $3^{-k}$, $k=1,2,...,m$, of  $3^{-\ell}$-neighborhood of the complex zeroes of $\phi_\theta(z)$. By \eqref{number} and by this whole subsection $\OTL\subset\bigcup_{j=1}^L J_j(\theta)$, where $L\leq C\,3^m$ and $|J_j(\theta)|\leq C\,3^{-k-\ell}$ ($k=k(j)\in \{1,2,...,m\}$).

\subsection{Branch points of $\phi_\theta$}
\label{branch1}

For a point $\zeta=\theta + i\sigma\in Q$ we call $z$ a {\it branch point} of $\phi_\zeta(z)$ if  $z\in [-3^m-1, 3^m+1]\times [-H/2,H/2]$ is such that
\begin{equation}
\label{eqbranch}
\begin{cases}
\phi_\zeta(z) =0 \\ 
\frac{\partial}{\partial z} \phi_\zeta(z) =0\,.
\end{cases}
\end{equation}

\begin{lemma}
\label{realbrp}
For real $\zeta=\theta\in Q\cap\R$ there are no branch points.
\end{lemma}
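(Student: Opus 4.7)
The plan is to show that for real $\theta$, the system $\phi_\theta(z) = 0 = \phi_\theta'(z)$ is overdetermined. Setting $u_j := e^{-ic_j z}$, the two branch-point equations become the linear system
\[
u_1 + u_2 + u_3 = 0, \qquad c_1 u_1 + c_2 u_2 + c_3 u_3 = 0,
\]
whose solution space, under the main-case assumption that $c_1, c_2, c_3$ are pairwise distinct, is one-dimensional and spanned by $(A, B, C) := (c_2 - c_3,\, c_3 - c_1,\, c_1 - c_2)$. Thus at a branch point $u_j = \lambda(c_{j+1} - c_{j+2})$ for some nonzero $\lambda \in \C$.

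For real $\theta$ the numbers $A, B, C$ are real, and writing $z = x + iy$ gives $|u_j| = e^{c_j y}$. Taking ratios of the three equalities $|u_j| = |\lambda|\cdot |c_{j+1}-c_{j+2}|$ leaves the single consistency condition
\[
A \log|A| + B \log|B| + C \log|C| = 0.
\]
I would then substitute the explicit formulas (with $\alpha := \theta - \pi/2$)
\[
A = \sqrt{3}\sin\alpha,\quad B = -\sqrt{3}\sin(\alpha + \pi/3),\quad C = -\sqrt{3}\sin(\alpha - \pi/3),
\]
use $A + B + C = 0$ to kill the $\log\sqrt{3}$ contributions, and invoke the key identity $\sin\alpha = \sin(\alpha + \pi/3) + \sin(\alpha - \pi/3)$. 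Setting $b := \sin(\alpha + \pi/3)$ and $c := \sin(\alpha - \pi/3)$, the condition collapses to
\[
g(b + c) = g(b) + g(c), \qquad g(t) := t\log|t|.
\]

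The main obstacle is showing that this equality forces $bc(b + c) = 0$. I would carry this out by a short case analysis on the signs of $b, c, b + c$, exploiting the strict convexity of $g$ on $(0, \infty)$ together with its oddness (hence strict concavity on $(-\infty, 0)$). The model case $b, c > 0$ is the Shannon/Kullback--Leibler inequality
\[
g(b+c) - g(b) - g(c) = b\log(1 + c/b) + c\log(1 + b/c) > 0,
\]
and the remaining sign patterns reduce to this by oddness and symmetric rearrangement, always yielding strict inequality unless $b = 0$, $c = 0$, or $b + c = 0$. By the formulas for $A, B, C$ above, each such vanishing forces two of $c_1, c_2, c_3$ to coincide, contradicting the standing distinctness hypothesis of the main case.

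Finally, I would dispatch the degenerate case of two equal $c_j$'s (say $c_2 = c_3$) directly: then $\phi_\theta(z) = e^{-ic_1 z} + 2 e^{-ic_2 z}$, and at any zero $e^{-ic_1 z} = -2 e^{-ic_2 z}$, giving $\phi_\theta'(z) = 2 i (c_1 - c_2) e^{-ic_2 z} \neq 0$ (not all three $c_j$'s can coincide, or else $\phi_\theta \equiv 3$). This rules out branch points in every case, completing the argument.
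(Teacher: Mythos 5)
Your proof is correct, and it genuinely diverges from the paper's after the common starting step. Both arguments begin by writing the branch-point conditions $\phi_\theta(z)=\phi_\theta'(z)=0$ as a linear system in $u_j=e^{-ic_jz}$ and observing that the solution ray is spanned by $(c_2-c_3,c_3-c_1,c_1-c_2)$; this is exactly the reduction the paper encodes (after rescaling by $\sqrt{3}$) in its equations \eqref{eqbranch1}--\eqref{eqbranch2}. Both then pass to moduli. From that point the methods part ways. The paper keeps the unknown $Y=\Im Z$ in play: it writes the three modulus relations $e^{-bY}=|a|/|a+b|$, $e^{aY}=|b|/|a+b|$, $e^{-(a+b)Y}=|a|/|b|$ and runs a six-case sign analysis on $(a,b,a+b)$, in each case extracting two incompatible sign constraints on $Y$. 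You instead eliminate $Y$ and $\log|\lambda|$ outright, using $A+B+C=0$ together with the (equally easy) identity $Ac_1+Bc_2+Cc_3=0$ to obtain the single unknown-free consistency relation $A\log|A|+B\log|B|+C\log|C|=0$, which after the $\sqrt{3}$-normalization (the $\log\sqrt{3}$ terms drop out precisely because $A+B+C=0$) becomes $g(b+c)=g(b)+g(c)$ with $g(t)=t\log|t|$. Strict convexity of $g$ on $(0,\infty)$ plus its oddness then shows this fails unless $bc(b+c)=0$, which you correctly translate back to two of the $c_j$ coinciding, i.e.\ $\theta\in W$; and you dispose of that degenerate case by a direct computation ($u_1$ would have to vanish). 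The paper's version is more elementary (pure sign chasing, no convexity), while yours isolates the mechanism as a single recognizable entropy-type inequality and is somewhat tidier conceptually. The only places you compress are (i) the reduction of the mixed-sign cases to the $b,c>0$ model via oddness and the substitution $b=(b+c)+(-c)$, and (ii) the observation that $b+c=0$ forces $c=-b$ and so $g(b+c)=0=g(b)+g(c)$; both are routine and easily filled in. One small remark: strictly speaking the modulus consistency is only a necessary condition for a branch point, but that is all the lemma requires, so no gap results.
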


\begin{proof}
We will prove more: that for $\zeta=\theta\in Q\cap\R$ the system \eqref{eqbranch} has no solutions $z$ at all.

As always $c_1=\cos(\theta-\pi/2), c_2=\cos(\theta-7\pi/6), c_3=\cos(\theta+\pi/6)$, also
$$
b:= (c_2-c_1)/ \sqrt{3}=\sin(\theta-5\pi/6), a:= (c_1-c_3)/\sqrt{3} =\sin (\theta-\pi/6), -(a+b) = (c_3-c_2)/ \sqrt{3}=\cos \theta\,.
$$
If \eqref{eqbranch} is valid then
\begin{equation}
\label{eqbranch1}
\begin{cases}
e^{iZb} + e^{-iZa} =-1\\ 
be^{iZb} -ae^{-iZa} =0\,.
\end{cases}
\end{equation}
Hence
\begin{equation}
\label{eqbranch2}
\begin{cases}
e^{iZb} =-\frac{a}{a+b}\\ 
e^{-iZa} =-\frac{b}{a+b}
\end{cases}
\end{equation}
has a solution ($Z=\sqrt{3}z$) . Here $a, b\in\R$. If $a=0$ or $b=0$ or $a+b=0$ there is no solution of \eqref{eqbranch2} just because the exponent cannot be zero or infinity. 

Suppose all these three numbers do not vanish. Take absolute values in \eqref{branch2}:
\begin{equation}
\label{eqbranch3}
\begin{cases}
e^{-bY} =\frac{|a|}{|a+b|}\\ 
e^{aY} =\frac{|b|}{|a+b|}\\
e^{-(a+b)Y} =\frac{|a|}{|b|}\,.
\end{cases}
\end{equation}

Consider the cases:

1. $ a>0, b>0$. Then the first gives $bY>0$, and $b>0$, so $Y>0$.
The second gives $aY<0$,  and $a>0$, so $Y<0$. Contradiction.

2. $ a<0, b<0$. Then the first gives $bY>0$, and $b<0$, so $Y<0$.
The second gives $aY<0$,  and $a<0$, so $Y>0$. Contradiction.

3. $ a>0, b<0, a+b>0$. Then $|a|>|b|$. Then the first gives $-bY>0$, and $b<0$, so $Y>0$.
The third gives $-(a+b)Y>0$,  and $a+b>0$, so $Y<0$. Contradiction.

4. $ a>0, b<0, a+b<0$. Then $|a|<|b|$. Then the second gives $aY>0$, and $a<0$, so $Y<0$.
The third gives $-(a+b)Y<0$,  and $a+b<0$, so $Y<0$. Contradiction.

5. $ a<0, b>0, a+b>0$. Then $|b|>|a|$. Then the second gives $aY>0$, and $b<0$, so $Y>0$.
The third gives $-(a+b)Y>0$,  and $a+b>0$, so $Y>0$. Contradiction.

6. $ a<0, b>0, a+b<0$. Then $|a|>|b|$. Then the first gives $-bY>0$, and $b>0$, so $Y<0$.
The third gives $-(a+b)Y>0$,  and $a+b<0$, so $Y>0$. Contradiction.

\end{proof}

Actually we just proved a little bit more. To formulate it we need some notations.
Let $W:= \{\pi/2, 5\pi/6, \pi/6\,\, (mod \,\pi)\}\cap Q$. It is a finite set, let $W_m:= \cup_{w\in W} D(w, 2\cdot 3^{-40m})$.

Recall the above definition of $a(\theta), b(\theta)$ and put ($Z=\sqrt{3}z$, $Z=X+iY$)
$$
d_{\theta}(z):= \max\bigg(\bigg| |e^{iZb(\theta)}\cdot \frac{|a(\theta)+b(\theta)|}{|a(\theta)|}-1\bigg|, \bigg| |e^{-iZa(\theta)}\cdot \frac{|a(\theta)+b(\theta)|}{|b(\theta)|}-1\bigg|\bigg)\,.
$$ 
$$
d_{\theta}:=\inf_{z\in \C} d_{\theta}(z)\,.
$$
This is called discrepancy. We actually proved the following estimate for the discrepancy.

\begin{lemma}
\label{realdiscr}
There is an absolute positive constant $c$ such that
$\min_{\theta\in Q\setminus W_m} d_{\theta} \ge c\, 3^{-40m}$.
\end{lemma}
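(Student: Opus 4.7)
My plan is to promote the qualitative sign argument of Lemma $\ref{realbrp}$ to a quantitative estimate. Writing $Z = X + iY$, one checks $|e^{iZb}| = e^{-bY}$ and $|e^{-iZa}| = e^{aY}$, so $d_\theta(z)$ depends only on $Y\in\R$, and
$$d_\theta = \inf_{Y\in\R}\max\big(|e^{-bY}/p-1|,\,|e^{aY}/q-1|\big),$$
where $p:=|a|/|a+b|$ and $q:=|b|/|a+b|$, with $a,b$ evaluated at $\theta$.

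The first step is a logarithmic linearization. If $d_\theta \ge 1/2$ the claim is trivial, so assume $d_\theta \le \delta$ for some $\delta \le 1/2$ and pick a near-witness $Y$. Then $e^{-bY}=p(1+\eta_1)$ and $e^{aY}=q(1+\eta_2)$ with $|\eta_i|\le\delta$; using $|\log(1+x)|\le 2|x|$ on $|x|\le 1/2$ gives $|-bY-\log p|\le 2\delta$ and $|aY-\log q|\le 2\delta$. Multiplying the first inequality by $a$ and the second by $b$ and adding, the $Y$-terms cancel and we get
$$|a\log p+b\log q|\le 2(|a|+|b|)\delta.$$
The left-hand side is precisely $|F(\theta)|$, where
$$F(\theta):=a\log|a|+b\log|b|-(a+b)\log|a+b|.$$
Since $|a|,|b|\le 1$, this forces $d_\theta\ge c_*\,|F(\theta)|$ for some absolute constant $c_*>0$.

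The second step is to show $|F(\theta)|\ge c'\cdot 3^{-40m}$ on $Q\setminus W_m$ by a short case analysis on the signs of $a$, $b$, $s:=a+b$. Since $F(\theta+\pi)=-F(\theta)$, I assume $s>0$; by the definition of $W_m$ we have $|a|,|b|,s\in [c_0\,3^{-40m},\,2]$. If $a,b>0$, then $|F|=a\log(s/a)+b\log(s/b)$; assuming WLOG $a\le b$ gives $s/a\ge 2$, hence $|F|\ge a\log 2\ge c_0\log 2\cdot 3^{-40m}$. If $a>0$, $b<0$ (so $s>0$ forces $a>|b|$), set $q=|b|/s$: a direct computation yields $F=s\,g(q)$ with $g(q):=(1+q)\log(1+q)-q\log q$, a strictly positive, strictly increasing function on $(0,\infty)$. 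Then either $q\ge e^{-2}$ and $|F|\ge s\,g(e^{-2})\ge c''\cdot 3^{-40m}$, or $q<e^{-2}$ and the elementary bound $g(q)\ge -\tfrac12 q\log q$ gives $|F|\ge\tfrac12|b|\log(s/|b|)\ge|b|\ge c_0\cdot 3^{-40m}$. The remaining subcase $a<0$, $b>0$, $s>0$ is symmetric.

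Combining the two steps produces $d_\theta\ge c\cdot 3^{-40m}$. The main obstacle is not any individual computation but the bookkeeping at the boundary of $W_m$: the lower bound on $|F|$ must not collapse when one of $|a|,|b|,|a+b|$ nears its minimum $c_0\,3^{-40m}$. In each subcase this is secured because a logarithmic factor (the binary entropy in the first subcase, or $-q\log q$ in the second) can absorb a magnitude as large as $\log 3^{40m}\asymp m$---more than enough to compensate for the smallness of the linear prefactor.
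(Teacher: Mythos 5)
Your proof is correct, and it supplies exactly the quantitative upgrade that the paper asserts (with the remark ``Actually we just proved a little bit more'') but never writes out after the sign analysis of Lemma~\ref{realbrp}. The linear combination $a\cdot(\text{log of the first constraint})+b\cdot(\text{log of the second})$ cleanly eliminates the auxiliary variable $Y$ and reduces the claim to a lower bound on the scalar entropy-type quantity $F(\theta)=a\log|a|+b\log|b|-(a+b)\log|a+b|$, after which your case analysis on the signs of $a,b,a+b$ parallels the cases of Lemma~\ref{realbrp} and correctly exploits the quantitative separation $|a|,|b|,|a+b|\gtrsim 3^{-40m}$ guaranteed by excising $W_m$.
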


Similarly for complex $\zeta=\theta+i\sigma$ we  have 
$$
b:= \sin(\theta+i\sigma-5\pi/6), a:= \sin (\theta+i\sigma-\pi/6)\,.
$$
\begin{equation}
\label{Imb}
 \Im b= \cos(\theta-5\pi/6)\cdot\sinh \sigma\,,\,\,\Im a= \cos (\theta -\pi/6)\cdot\sinh\sigma\,.
 \end{equation}
 
We introduce
$$
d_{\zeta}(z):= \max\bigg(\bigg| |e^{iZb(\zeta)}\cdot \frac{|a(\zeta)+b(\zeta)|}{|a(\zeta)|}-1\bigg|, \bigg| |e^{-iZa(\zeta)}\cdot \frac{|a(\zeta)+b(\zeta)|}{|b(\zeta)|}-1\bigg|\bigg)\,.
$$ 
$$
d_{\zeta}:=\inf_{z\in [-3^m-1, 3^m+1]\times [-H/2, H/2]} d_{\zeta}(z)\,.
$$

\subsection{Branch points of $\phi_\zeta$}
\label{branch2}

If we leave the real axis and venture $\zeta=\theta+i\sigma$ into a complex domain we get the factor
$e^{\pm \cos(\theta-5\pi/6)\,X\,\sinh\sigma}$ into $|e^{iZb}|$ and the factor $e^{\pm \cos(\theta-\pi/6)\,X\,\sinh\sigma}$ into $|e^{-iZa}|$. This is from \eqref{Imb}. Clearly it is very close to $1$ if $|X|\le 3^m +1$ and $|\sigma|\le 3^{-100m}$. The change ratios $|a(\theta)|/|a(\theta+i\sigma)|, |b(\theta)|/|b(\theta+i\sigma)|$ will also be very close to $1$ if $\theta, \theta+i\sigma\in Q\setminus W_m$, and $|\sigma|\le 3^{-100m}$. They differ from $1$ by at most $C\, 3^{-96m}$. Therefore we proved

\begin{lemma}
\label{compdiscr}
Let $\zeta\in Q\setminus W_m, |\Im\zeta|\le 3^{-100m}$. Then $d_{\zeta}\ge \frac{c}{2} 3^{-40m}$, where $c$ is the absolute constant from Lemma \ref{realdiscr}.
\end{lemma}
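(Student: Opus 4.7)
The plan is a straightforward perturbation argument exploiting the scale separation $|\sigma|\le 3^{-100m}\ll 3^{-40m}$. For each $z=(X+iY)/\sqrt 3$ in the box $[-3^m-1,3^m+1]\times[-H/2,H/2]$ I would compare the two real products
\[
A_\zeta(z):=|e^{iZb(\zeta)}|\,\tfrac{|a(\zeta)+b(\zeta)|}{|a(\zeta)|},\qquad B_\zeta(z):=|e^{-iZa(\zeta)}|\,\tfrac{|a(\zeta)+b(\zeta)|}{|b(\zeta)|}
\]
with their real-$\theta$ counterparts $A_\theta(z),B_\theta(z)$, and then transfer the lower bound $d_\theta(z)\ge c\cdot 3^{-40m}$ supplied by Lemma~\ref{realdiscr} (whose infimum runs over all of $\C$ and so dominates the infimum over the restricted box defining $d_\zeta$) through this small perturbation.

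The exponential factor is handled by the identity $b(\zeta)=b(\theta)\ch\sigma+i\cos(\theta-5\pi/6)\sh\sigma$ noted in \eqref{Imb}, which gives
\[
\tfrac{|e^{iZb(\zeta)}|}{|e^{iZb(\theta)}|}=\exp\bigl(-X\cos(\theta-5\pi/6)\sh\sigma-Yb(\theta)(\ch\sigma-1)\bigr).
\]
With $|X|\le 3^m+1$, $|Y|\le H/2$, and $|\sh\sigma|\le 2\cdot 3^{-100m}$, the exponent is $O(3^{-99m})$, and the same estimate controls the $a$-exponential. For the three modulus ratios I would use that the hypothesis $\theta\in Q\setminus W_m$ keeps $\theta$ at distance at least $2\cdot 3^{-40m}$ from each zero locus $\pi/6,5\pi/6,\pi/2$ of $a(\theta), b(\theta), a(\theta)+b(\theta)$ respectively, so each of $|a(\theta)|,|b(\theta)|,|a(\theta)+b(\theta)|$ is at least $c'\cdot 3^{-40m}$; meanwhile holomorphicity and boundedness of $\sin',\cos'$ give perturbations of size $O(|\sigma|)=O(3^{-100m})$, producing ratios of the form $1+O(3^{-60m})$. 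Combining these estimates yields $A_\zeta(z)=A_\theta(z)\bigl(1+O(3^{-60m})\bigr)$ and likewise for $B$.

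The final step is a brief dichotomy on the size of $A_\theta(z)$. If $A_\theta(z)\le 10$, the absolute error $|A_\zeta-A_\theta|=O(3^{-60m})$ is much smaller than $c\cdot 3^{-40m}$, so $|A_\zeta-1|\ge(c/2)\,3^{-40m}$; if $A_\theta(z)\ge 10$, then $|A_\theta-1|\ge A_\theta/2$ swamps the relative error $O(3^{-60m})A_\theta$, forcing $|A_\zeta-1|\ge A_\theta/4\gg 3^{-40m}$. The same alternative applies to $B_\theta$, and since Lemma~\ref{realdiscr} guarantees that at least one of $|A_\theta-1|,|B_\theta-1|$ realizes $d_\theta(z)\ge c\cdot 3^{-40m}$, we conclude $d_\zeta(z)\ge(c/2)\,3^{-40m}$ uniformly in $z$, and taking the infimum proves the lemma. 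The ``hard part'' is purely bookkeeping of the three exponents $40m, 60m, 100m$: the paper's choice of radius $3^{-40m}$ for $W_m$ and bound $3^{-100m}$ for $|\sigma|$ is calibrated precisely so the perturbation error $3^{-60m}$ sits strictly below the target $3^{-40m}/2$, which holds once $m$ exceeds an absolute threshold that is absorbed into the constant $c$.
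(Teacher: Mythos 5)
Your argument is correct and is essentially the paper's own proof, just written out in more detail: both treat $\zeta=\theta+i\sigma$ with $|\sigma|\le 3^{-100m}$ as a small perturbation of the real case, showing the exponential factors $|e^{iZb}|,|e^{-iZa}|$ change by $\exp(O(3^{-99m}))$ and the ratios involving $|a|,|b|,|a+b|$ change by $1+O(3^{-60m})$ (the paper states $C\cdot 3^{-96m}$, a less tight but equally harmless bound), both far below the $3^{-40m}$ scale set by Lemma~\ref{realdiscr}. Your explicit dichotomy on the size of $A_\theta(z)$ is a clean way to pass from relative to absolute error, a detail the paper leaves implicit.
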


\subsection{Zeros of $\phi_\zeta$ as analytic functions: $\Lambda:\zeta\rightarrow\Lambda(\zeta)$}
\label{brzeros}

Let us fix a point $\theta_0\in Q\setminus 2\, W_m$, and consider the disc $D(\theta_0):=D(\theta_0, 2\cdot 3^{-Bm})$, where $B\ge 100$ and will be chosen later. For any $\zeta \in \bar D(\theta_0, 2\cdot 3^{-Bm})$ (the closure of the disc) we consider
zeros $Z_{m, H/4}(\zeta):=\{\lambda_i(\zeta), i=1,...,I(\zeta)\}$ of $\phi_\zeta$  lying in $[-3^m,3^m]\times [-H/4, H/4]$.
We know that there exists an absolute constant $M$ independent of $\theta_0, \zeta$ such that
\begin{equation}
\label{M2le}
\text{card} (Z_{m, H}\cap [x-1,x+1]\times [-3/2H,3/2H]) \le M\,,\, \forall x\in [-3^m,3^m]\,.
\end{equation}

\begin{lemma}
\label{cont1}
These are continuous functions on  $\bar D(\theta_0, 2\cdot 3^{-Bm})$.
\end{lemma}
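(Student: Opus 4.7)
The plan is to invoke the absence of branch points (Lemma \ref{compdiscr}) to apply Rouch\'e's theorem and trace each zero of $\phi_\zeta$ as a continuous (in fact analytic) function of $\zeta$ across the disc $\bar D(\theta_0, 2\cdot 3^{-Bm})$.

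First I would fix $\zeta_0 \in \bar D(\theta_0, 2\cdot 3^{-Bm})$ and a zero $\lambda_0 = \lambda_i(\zeta_0)$ of $\phi_{\zeta_0}$ in $[-3^m,3^m]\times [-H/4, H/4]$. Since $\theta_0 \in Q\setminus 2W_m$ and $3^{-Bm}$ is much smaller than the excluded scale $3^{-40m}$, the hypotheses of Lemma \ref{compdiscr} are met and yield $d_{\zeta_0} \ge \tfrac{c}{2}\cdot 3^{-40m}$. Translating this back through \eqref{eqbranch1}--\eqref{eqbranch2} shows that at any zero $\lambda_0$ of $\phi_{\zeta_0}$ one necessarily has $|\partial_z \phi_{\zeta_0}(\lambda_0)| \gtrsim 3^{-40m}$: a small $|\partial_z\phi_{\zeta_0}(\lambda_0)|$ together with $\phi_{\zeta_0}(\lambda_0)=0$ would force an approximate solution of \eqref{eqbranch2}, contradicting the discrepancy bound (note that $|a|,|b|,|a+b|$ are bounded below away from $W_m$). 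In particular $\lambda_0$ is simple, and combining with Lemma \ref{M} (a uniform bound $M$ on zero multiplicity per unit box) I can encircle $\lambda_0$ by $\partial D(\lambda_0, r)$ with an explicit $r=r(m)>0$ on which $|\phi_{\zeta_0}(z)| \ge \delta$ for some $\delta=\delta(m)>0$, while excluding every other zero of $\phi_{\zeta_0}$.

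Next I use joint entirety of $\phi_\zeta(z)$ together with the explicit formula: in the region $|\Im\zeta|\le 3^{-100m}$, $|z|\le 3^m$, $|\Im z|\le H$ one has $|\phi_\zeta| = O(1)$ and $|\partial_\zeta \phi_\zeta|=O(3^m)$ (the only unbounded factor being $|z|$). Hence for $\zeta\in D(\zeta_0,\rho)$ with $\rho\cdot 3^m\ll\delta$ one obtains $|\phi_\zeta(z)-\phi_{\zeta_0}(z)|<\delta$ uniformly on $\partial D(\lambda_0,r)$, and Rouch\'e's theorem produces a unique zero of $\phi_\zeta$ inside $D(\lambda_0,r)$, which I declare to be $\lambda_i(\zeta)$. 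This gives a local holomorphic (in particular continuous) extension near $\zeta_0$. Chaining finitely many such local Rouch\'e arguments along paths in $\bar D(\theta_0, 2\cdot 3^{-Bm})$ extends $\lambda_i(\zeta)$ throughout the disc; no monodromy can arise because simplicity of every zero persists on the whole simply-connected disc (by Lemma \ref{compdiscr}), so the analytic continuation along any two paths agrees.

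The main obstacle is quantitative matching of scales: either the Rouch\'e continuation radius $\rho$ must exceed $2\cdot 3^{-Bm}$, or one must chain a controlled number of continuations. Tracing bounds gives $\rho\gtrsim \delta/3^m\gtrsim 3^{-(C_1 m+1)}$, where $C_1$ absorbs the discrepancy exponent $40$ and the constant $M$. Choosing $B>C_1+1$ is therefore enough, and this is harmless in the global scheme because $B$ only enters elsewhere as the mild penalty $3^{Bm}$ on the number $T$ of subintervals $J^t$. With such $B$, each $\lambda_i(\zeta)$ never leaves an $r$-neighborhood of its initial position across the disc; it remains in $[-3^m,3^m]\times [-H/4,H/4]$ so the index $I(\zeta)$ is constant, and it depends analytically --- in particular continuously --- on $\zeta\in\bar D(\theta_0, 2\cdot 3^{-Bm})$, as claimed.
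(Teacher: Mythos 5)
Your proof is correct in outline and closely mirrors the paper's in that both rest on Rouch\'e's theorem together with the absence of branch points (Lemma~\ref{compdiscr}), but the two arguments differ in an instructive way. The paper's proof of Lemma~\ref{cont1} is deliberately \emph{qualitative}: once simplicity of zeros is known, it sets $\eta(\zeta)=\min_{i\ne j}|\lambda_i(\zeta)-\lambda_j(\zeta)|>0$ and never attempts to bound $\eta$ from below. The lower bound \eqref{belowphi} then holds with a constant depending on $\eta$, Rouch\'e applies for $|\zeta'-\zeta|$ small compared to $(\eta/2)^M 3^{-m}$, and continuity (with an $\eta$-dependent modulus, see \eqref{changelambda1}) follows. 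The quantitative estimate (which you fold into the lemma) is proved separately afterwards as \eqref{changelambda2}, using the discs $D_s=D(\lambda_s(\theta_0),3^{-B'm})$ and the Blaschke bound \eqref{below3}; the monodromy-free global extension of each $\lambda_i$ across the disc is also treated there, not inside Lemma~\ref{cont1}. You instead go quantitative immediately: use the discrepancy bound to obtain a lower bound on $|\partial_z\phi_{\zeta_0}(\lambda_0)|$, hence a lower bound on the zero separation, hence an explicit circle on which $|\phi_{\zeta_0}|\ge\delta(m)$, and then chain Rouch\'e continuations to cover the whole disc. This is a legitimate alternative route and has the advantage of producing an explicit modulus of continuity in one stroke, whereas the paper's organization isolates the soft statement (continuity) from the hard one (uniform estimate and single-valuedness).

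Two caveats. First, your exponent is off by a factor of two: $d_{\zeta_0}\ge\tfrac{c}{2}3^{-40m}$ together with $d_{\zeta_0}(\lambda_0)\lesssim|\partial_z\phi_{\zeta_0}(\lambda_0)|/\min(|a|,|b|)$ and $\min(|a|,|b|,|a+b|)\gtrsim 3^{-40m}$ away from $2W_m$ gives $|\partial_z\phi_{\zeta_0}(\lambda_0)|\gtrsim 3^{-80m}$, not $3^{-40m}$; this propagates to the constant you call $C_1$, and one ends up needing roughly $B\gtrsim 80M$ rather than $40M$. This only affects the bookkeeping for $B$, but if you intend these quantitative constants to feed into the choice of $\ell=100Bm$ later, the factor matters. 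Second, the passage from the derivative lower bound to the statement that there is an $r(m)$-circle around $\lambda_0$ on which $|\phi_{\zeta_0}|\ge\delta(m)$ and which excludes every other zero is the crux, and you only assert it. It does hold, but the proof requires the Blaschke/Harnack mechanism of Lemmas~\ref{schke1}--\ref{schke2}: write $\phi_{\zeta_0}=B\cdot g$ with $g$ non-vanishing and bounded above and below on a unit box; then $|\partial_z\phi_{\zeta_0}(\lambda_0)|\lesssim|\lambda_0-\lambda_1|$ for the nearest other zero $\lambda_1$, giving the separation $\gtrsim 3^{-80m}$, and on a circle of radius one third of that separation one gets $|\phi_{\zeta_0}|\gtrsim(3^{-80m})^M$. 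Without explicitly invoking those lemmas, this step is a gap in the write-up, even though the conclusion is correct.
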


\begin{proof}
Let $\zeta\in \bar D(\theta_0, 2\cdot 3^{-Bm})$. All points in $Z_{m,H}(\zeta)$ are simple zeros, this follows from Lemma \ref{compdiscr}, for example. Let $\eta(\zeta):= \min_{i\neq j, i,j\le I(\zeta)}|\lambda_i(\zeta)-\lambda_j(\zeta)|$.
Then $\eta>0$. Fix $i$, call $\lambda=\lambda_i(\zeta)$.
We know that there exists an absolute constant $M$ independent of $\theta_0, \zeta$ such that
\begin{equation}
\label{M2}
\text{card}(Z_{m, H}(\zeta)\cap [\Re\lambda -1, \Re\lambda+1]\times [-H, H]) \le M\,.
\end{equation}

Consider the discs of radius $\eta/2, \eta\in (0,\eta(\zeta))$ around $\lambda$ and around other points in $Z_{m, H}(\zeta)\cap [\Re\lambda -1, \Re\lambda+1]\times [-H, H]$. Call them $B_1,..., B_{M'}$, $M'\le M$, $B_1$ is the one centered at $\lambda$.  We also know that
\begin{equation}
\label{belowphi}
|\phi_{\zeta}(z)|\ge a\, (\eta/2)^M\,,\,\,\forall z\in [\Re\lambda -1, \Re\lambda+1]\times [-H, H] \setminus \cup_{s=1}^{M'} B_s\,.
\end{equation}
Obviously
\begin{align}
\label{dzetaphi}
|\phi_{\zeta}(z) -\phi_{\zeta'}(z)| \le C_0\,|\zeta-\zeta'|\cdot 3^m\,,
\\\,\,\forall \zeta,\zeta'\in \bar D(\theta_0), z\in
[-3^m-1, 3^m+1]\times [-3/2H,3/2H]\,.
\end{align}

Hence, if $\zeta'$ is very close to $\zeta$, namely 
$$
|\zeta'-\zeta|\le  \frac{a}{3C_0}\, (\eta/2)^M\cdot 3^{-m}
$$
we get that 
$$
|\phi_{\zeta}(z)| > |\phi_{\zeta}(z) -\phi_{\zeta'}(z)|\,,\,\,\forall z \in \cup_{s=1}^{M'} \partial B_s\,.
$$
So these functions: $\phi_{\zeta}(z), \phi_{\zeta'}(z)$ have the same number of zeros in each $B_s$ by Rouch\'e's theorem. We need $s=1$, $B_1$ being centered at $\lambda=\lambda_i(\zeta)$.
We conclude that 
$$
\zeta'\in B(\lambda,\eta(\zeta)/2)\rightarrow\lambda(\zeta')
$$
is a continuous function of $\zeta'$ at $\zeta$:
\begin{equation}
\label{changelambda1}
|\lambda(\zeta')-\lambda(\zeta)| \le (3^{m+1}C_0|\zeta'-\zeta|/a)^{1/M}\,.
\end{equation}
Lemma is proved.

\end{proof}

\noindent{\bf Definition.}
$$
\delta_1:= \min\{|\lambda_i(\zeta)-\lambda_j(\zeta)|: i\neq j,\,i,j\le I(\zeta),\, \zeta\in \bar D(\theta_0)\,.
$$

We proved
\begin{equation}
\label{delta1}
\delta_1 > 0\,.
\end{equation}

Given a radial path $p=[\theta_0, \theta_0 + r e^{it}]=[\theta_0,\zeta], r< 2\cdot 3^{-Bm}$ and one $i\le I(\theta_0)$ we can now consider a well-defined and continous function $t\in[0,r]\rightarrow \lambda_i(\theta_0 + t e^{it})$. So we extend $\lambda_i(\theta_0)$ to a  function $\lambda_i(\zeta), \zeta\in D(\theta_0, 2\cdot 3^{-B'm})$, $i=1,...,I(\theta_0)$.
These are all single valued analytic function in this disc. In fact, let us see that a just defined $\lambda_i(\zeta)$ satisfies
\begin{equation}
\label{changelambda2}
|\lambda_i(\zeta)-\lambda_i(\theta_0)| \le C_1\, 3^{-B'm}\,,\,\forall \zeta \in D(\theta_0, 2\cdot 3^{-Bm})\,.
\end{equation}

Suppose \eqref{changelambda2} is already proved. We saw how to extend the analytic germ of $\lambda_i(\cdot)$. By the monodromy theorem we would have a single valued analytic function in $D(\theta_0, 2\cdot 3^{-Bm})$ if we can show that we do not meet branch points while extending to $\zeta\in D(\theta_0, 2\cdot 3^{-Bm})$ along the paths inside $D(\theta_0, 2\cdot 3^{-Bm})$. But if we choose $B'$ large enough, then \eqref{changelambda2} shows that the extension $\lambda_i(\zeta)$ is still in $[-3^m-1,3^m+1]\times [-H,H]$. So by Lemma \ref{compdiscr} we could not meet branch points.

We are left to prove \eqref{changelambda2}. We use Rouch\'e's theorem again. We fix $i\le I(\theta_0)$ and denote $\lambda:=\lambda_i(\theta_0)$ as before in Lemma \ref{cont1}. 

Consider the discs $D_s:=D(\lambda_s(\theta_0), 3^{-B'm})$, $s=1,..., M'$ around zeros of $\phi_{\theta_0}(z)$ lying in
$[\Re\lambda-1, \Re\lambda+1]\times [-H,H]$. Unlike Lemma \ref{cont1} they may be not disjoint. But the number of them is still at most $M$, where $M$ is an absolute constant. Let $\Omega$ be a connected component of $\cup_{s=1}^{M'} D_s$ containing $\lambda$.
Obviously
\begin{equation}
\label{Omega}
\diam\,\Omega \le M\, 3^{-B'm}\,.
\end{equation}
Let $\zeta=\theta_0 + re^{it}$, let $\gamma$ is a continuous path $\lambda_i(\theta_0 + ue^{it}), u\in[0,r], r< 2\cdot 3^{-Bm}$. It starts in $\Omega$, but suppose it hits the boundary of $\Omega$ for $t=t_0$. Denote $\zeta_0=\theta_0+ t_0e^{it}$. We know that

\begin{equation}
\label{below3}
|\phi_{\theta_0}(z)|\ge a\, 3^{-B'Mm}\,,\,\,\forall z\in [\lambda -1, \lambda+1]\times [-H, H] \setminus \cup_{s=1}^{M'} D_s\,.
\end{equation}
Obviously
\begin{align}
\label{dzetaphi3}
|\phi_{\theta_0}(z) -\phi_{\zeta_0}(z)| \le C_0\,|\theta_0-\zeta_0|\cdot 3^m\le 2C_0\,3^m\,3^{-Bm}\,\,,
\\\,\,\forall \zeta_0\in \bar D(\theta_0, 2\cdot 3^{-Bm}), \, z\in
[-3^m-1, 3^m+1]\times [-3/2H,3/2H]\,.
\end{align}

Notice that
$$
\lambda_i(\theta_0 + t_0e^{it})\in [-3^m-1, 3^m+1]\times [-3/2H,3/2H]
$$ 
because of \eqref{Omega}. Then if $B> 10\,B'M$, the fact that $\phi_{\zeta_0}(\lambda_i(\theta_0 + t_0e^{it}))=0$ contradicts the combination of \eqref{below3} and \eqref{dzetaphi3} at $z:=\lambda_i(\theta_0 + t_0e^{it})\in \partial\Omega \subset \cup_{s=1}^{M'} \partial D_s$.

So our continuous  path never hits $\partial \Omega$. Hence for $\zeta=\theta_0 + re^{it}, r< 2\cdot 3^{-Bm}$, the point $\lambda_i(\zeta)\in \Omega$.
Then \eqref{Omega} proves \eqref{changelambda2}. So we have single-valued analytic branches in  $D(\theta_0, 2\cdot 3^{-Bm})$.

\subsection{Estimates of analytic functions $\zeta\in D(\theta_0, 3^{-Bm})\rightarrow \lambda(\zeta)$}
\label{estimates}

We choose the constant $\delta_0>0$  such that 1) in $B(0,\delta_0)$ there are no zeros of any function $\phi_\theta$, 2) $\delta_0< H/10$.

We again fix $\theta_0\in Q\cap\R \setminus 2W_m$, consider the discs $D(\theta_0):=D(\theta_0, 2\cdot 3^{-Bm})$ and $D'(\theta_0):= D(\theta_0, 1.5\,\cdot 3^{-Bm})$. Consider zeros of $\phi_\zeta(z), \zeta\in D(\theta_0), z\in [-3^m, 3^m]\times [-\delta_0/10, \delta_0/10]$, call them $\{\lambda_i(\zeta)\}_{i=1}^{I'(\theta_0)}$, and notice that if $B$ is sufficiently large, then

\begin{equation}
\label{est1}
|\lambda_i(\zeta)-\lambda_i(\theta_0)| \le 3^{-\frac1{20M} Bm}\,, \,\,\forall\zeta\in D(\theta_0),\,i=1,..,I'(\theta_0)\,.
\end{equation}

Here $M$ is an absolute bound on a number of zeros used above. This comes from \eqref{changelambda2} by carefully looking at how we chose $B'$ in \eqref{changelambda2}.

\bigskip

Recall $c_1(\zeta)=\cos ((\zeta)-\frac\pi{2})\,,\,c_2(\zeta)=\cos ((\zeta)-\frac{7\pi}{6})\,,\,c_3(\zeta)=\cos ((\zeta)+\frac{\pi}{6})$.

\noindent{\bf Definition.} Fix $i=1,..., I'(\theta_0)$, put
\begin{eqnarray*}
g_1(\zeta) := g_{1,i}(\zeta) = \frac12 (\lambda_i(\zeta)+\bar\lambda_i(\bar\zeta)) (c_2(\zeta) - c_1(\zeta))\\
g_2(\zeta) := g_{2,i}(\zeta) = \frac12 (\lambda_i(\zeta)+\bar\lambda_i(\bar\zeta)) (c_3(\zeta) - c_1(\zeta))\,.
\end{eqnarray*}

\begin{lemma}
\label{lest2}
$|g_1(\zeta)|, |g_2(\zeta)|, |g_1'(\zeta)|, |g_2'(\zeta)| \le C_2\, 3^{aBm}\,,$
where $a, C_2$ are absolute positive and finite, and $\zeta$ is any point of $D'(\theta_0)$.
\end{lemma}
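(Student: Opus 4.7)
The plan is to bound $g_1, g_2$ directly on $D(\theta_0)$ using the size bounds on $\lambda_i$ and the triviality of the cosine factors, then pass to the derivative via Cauchy's estimate, exploiting the fact that $D'(\theta_0)$ sits strictly inside $D(\theta_0)$ with a definite annular gap of width $0.5\cdot 3^{-Bm}$.

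First I would argue that the expression $\frac12(\lambda_i(\zeta)+\overline{\lambda_i(\bar\zeta)})$ is holomorphic in $\zeta$ on $D(\theta_0)$: the term $\overline{\lambda_i(\bar\zeta)}$ is the Schwarz reflection of the analytic germ $\lambda_i$, and since $\theta_0\in\R$ the disc $D(\theta_0)$ is symmetric under conjugation, so this function is well-defined and analytic on $D(\theta_0)$ by the construction in Section \ref{brzeros}. By \eqref{est1} combined with $\lambda_i(\theta_0)\in[-3^m,3^m]\times[-\delta_0/10,\delta_0/10]$, we obtain
\[
|\lambda_i(\zeta)|\le 3^m + 3^{-Bm/(20M)}\le 2\cdot 3^m,\quad \zeta\in D(\theta_0),
\]
and the same bound holds for $\overline{\lambda_i(\bar\zeta)}$. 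Since $c_j(\zeta)=\cos(\zeta+\text{const})$ are entire and $D'(\theta_0)\subset D(\theta_0)$ is contained in a fixed bounded region (say $|\zeta-\theta_0|\le 1$), the differences $c_2(\zeta)-c_1(\zeta)$ and $c_3(\zeta)-c_1(\zeta)$ are bounded by an absolute constant on $D(\theta_0)$. Multiplying the two bounds gives
\[
|g_1(\zeta)|,\ |g_2(\zeta)|\le C\cdot 3^m \quad\text{for every}\ \zeta\in D(\theta_0).
\]

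For the derivatives I would invoke Cauchy's integral formula. For $\zeta\in D'(\theta_0)$, the disc of radius $0.5\cdot 3^{-Bm}$ centered at $\zeta$ is contained in $D(\theta_0)$, so
\[
|g_j'(\zeta)|\le \frac{\max_{D(\theta_0)}|g_j|}{0.5\cdot 3^{-Bm}}\le 2C\cdot 3^{(B+1)m},\quad j=1,2.
\]
Choosing $a=2$ (which works since $B\ge 100$ guarantees $B+1\le aB$) and $C_2$ accordingly yields the claimed estimate for both $g_j$ and $g_j'$.

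There is no real obstacle here; the only subtle point is noticing that $\overline{\lambda_i(\bar\zeta)}$ is genuinely holomorphic in $\zeta$ on $D(\theta_0)$, which relies on $\theta_0$ being real so that $D(\theta_0)$ is invariant under conjugation, and that the holomorphic branch $\lambda_i$ constructed in Section \ref{brzeros} is defined on all of $D(\theta_0)$. Everything else is a routine modulus estimate followed by a one-shot Cauchy bound on the derivative.
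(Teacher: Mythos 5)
Your proof is correct and matches the paper's intent: the paper's one-line justification ("follows from \eqref{est1}, $|\lambda_i(\theta_0)|\le 3^m+1$, and analyticity of $g_1,g_2$ in $D(\theta_0)$") is precisely a terse version of your argument, with the Cauchy estimate on the annular gap between $D'(\theta_0)$ and $D(\theta_0)$ implicit. Your remark on the Schwarz-reflection holomorphy of $\overline{\lambda_i(\bar\zeta)}$, resting on $\theta_0\in\R$ so that $D(\theta_0)$ is conjugation-invariant, is the right way to make the paper's "analytic in $D(\theta_0)$" claim explicit.
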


\begin{proof}
This follows from \eqref{est1}, the fact that $|\lambda_i(\theta_0)|\le 3^m +1$, and the fact that all $g_1, g_2$ are analytic functions in $D(\theta_0)$.

\end{proof}

 Let $D''(\theta_0) :=D(\theta_0, 3^{-Bm})$.

\begin{lemma}
\label{dichot1}
Either
\begin{equation}
\label{nofzeros}
\text{card}\{\zeta\in D''(\theta_0): g_1'(\zeta)=0\} \le B_0(B)\,m\,,
\end{equation}
or 
\begin{equation}
\label{smallderiv}
\|g_1'\|_{L^{\infty}(D''(\theta_0))} \le 3^{-Bm}\,.
\end{equation}
\end{lemma}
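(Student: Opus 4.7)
The plan is to derive this dichotomy from Jensen's classical counting inequality applied to the analytic function $g_1'$. The main input is the growth bound $\|g_1'\|_{L^\infty(D'(\theta_0))}\le C_2\cdot 3^{aBm}$ of Lemma~\ref{lest2}; combined with the trivial pointwise bound $|g_1(\zeta)|\le C\cdot 3^m$ inherited from $|\lambda_i(\theta_0)|\le 3^m+1$ and \eqref{est1}, a Cauchy derivative estimate extends this to $\|g_1'\|_{L^\infty(D(\theta_0,(2-\varepsilon)\cdot 3^{-Bm}))}\le C_\varepsilon\cdot 3^{O(Bm)}$ for any fixed small $\varepsilon>0$.

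Assume \eqref{smallderiv} fails, so some $\zeta_*\in\overline{D''(\theta_0)}$ satisfies $|g_1'(\zeta_*)|>3^{-Bm}$. Since $|\zeta_*-\theta_0|\le 3^{-Bm}$, the disc $D(\zeta_*,R)$ with $R:=(2-\varepsilon)\cdot 3^{-Bm}-|\zeta_*-\theta_0|\ge(1-\varepsilon)\cdot 3^{-Bm}$ lies inside $D(\theta_0,(2-\varepsilon)\cdot 3^{-Bm})$. Jensen's formula applied to $g_1'$ on $D(\zeta_*,R)$ then gives, for every $\rho<R$,
\[
n_{g_1'}\!\bigl(D(\zeta_*,\rho)\bigr)\cdot\log(R/\rho)\le \log\|g_1'\|_{L^\infty(D(\zeta_*,R))}-\log|g_1'(\zeta_*)|=O(Bm),
\]
so taking $\rho=R/e$ bounds the number of zeros of $g_1'$ in the sub-disc $D(\zeta_*,R/e)$ by $O(Bm)$.

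To promote this local count to a count on all of $D''(\theta_0)$, I would cover $D''(\theta_0)$ by an absolute number of sub-discs of radius $R/e$, each centered at some $\xi_j\in D''(\theta_0)$, and bound the zeros inside each by the Jensen argument above -- the required base point with $|g_1'|>3^{-Bm}$ is supplied globally by the failure of \eqref{smallderiv}, and small translations of the Jensen center into a given sub-disc cost only a bounded multiplicative factor in the geometry. Summing over the finite cover produces a total zero count at most $B_0(B)\cdot m$ with $B_0(B)=O(B)$. The principal obstacle is the delicate geometry: the ratio between $D''(\theta_0)$ and the maximal disc of analyticity $D(\theta_0,2\cdot 3^{-Bm})$ is only $2$, which precludes a naive Blaschke-product bound and forces the combination of Jensen's formula at a well-chosen center with the covering argument. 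The analyticity of $\lambda_i$ on $D(\theta_0,2\cdot 3^{-Bm})$ underlying all of this was itself guaranteed by the absence of branch points established via Lemma~\ref{compdiscr}.
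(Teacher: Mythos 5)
Your proposal correctly identifies the two ingredients (the growth bound of Lemma~\ref{lest2} and a Jensen-type zero count), and the dichotomy structure is right. The gap is in the covering step, and it is a real one. You apply Jensen's formula at the center $\zeta_*$ where $|g_1'(\zeta_*)|>3^{-Bm}$ to count zeros only in $D(\zeta_*,R/e)$, with $R\lesssim 3^{-Bm}$; this sub-disc is far too small to cover $D''(\theta_0)$. You then propose to cover $D''(\theta_0)$ by finitely many discs $D(\xi_j,R/e)$ and ``bound the zeros inside each by the Jensen argument above,'' asserting that the base point is ``supplied globally'' and that ``small translations of the Jensen center cost only a bounded multiplicative factor.'' This does not work: Jensen's formula at center $\xi_j$ requires a lower bound on $|g_1'(\xi_j)|$, and no such bound is available at any $\xi_j\ne\zeta_*$ --- indeed $\xi_j$ could itself be a zero of $g_1'$. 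There is no ``translation of the Jensen center'' argument that transfers a lower bound from $\zeta_*$ to a different point.

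The way to make the off-center reference point work --- and this is what the paper's one-line invocation of ``Jensen's inequality in $D'$'' is really doing --- is a Blaschke-product count rather than a radial Jensen count. List the zeros $\lambda_1,\dots,\lambda_N$ of $g_1'$ in $D''(\theta_0)$, form the Blaschke factors $B_k$ for the disc $D'(\theta_0)$ (radius $1.5\cdot 3^{-Bm}$), and write $g_1'=g\prod_k B_k$ with $|g|\le L:=C_2\,3^{aBm}$ on $D'$. Because both $\zeta_*$ and every $\lambda_k$ lie in $D''(\theta_0)=\tfrac{2}{3}D'(\theta_0)$, which is compactly contained in $D'(\theta_0)$ with ratio $2/3$, each pseudo-hyperbolic factor satisfies $|B_k(\zeta_*)|\le q$ for an absolute $q<1$ (the pseudo-hyperbolic diameter of $\tfrac{2}{3}D'$ inside $D'$). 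Hence $3^{-Bm}<|g_1'(\zeta_*)|\le L\,q^N$, giving $N\le \log(L\cdot 3^{Bm})/\log(1/q)\le B_0(B)\,m$. This single global estimate replaces your covering step and closes the gap; no base point other than $\zeta_*$ is needed.

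Your preliminary Cauchy-estimate paragraph is harmless but unnecessary: Lemma~\ref{lest2} already provides the bound on $D'(\theta_0)$, which has radius $1.5\cdot 3^{-Bm}$, and that is all the Blaschke argument needs.
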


\begin{proof}
We have an analytic function $f=g_1'$ in the disc $D'$. It is bounded by $L=C_2 3^{aBm}$. Two things may happen:
at a certain point of $a\in 2/3D'$ it is bigger than $3^{-Bm}$. We write the Jensen's inequality for $\log|f|$ in $D'$. Then the number of zeros in $2/3D'$ will be estimated by $A(\log L-\log(3^{-Bm})$, which is less than $B_0(B)m$ in our case. So lemma's dichotomy is proved.

\end{proof}

 \begin{lemma}
 \label{gg}
 For every $i=1,..., I'(\theta_0)$ and every $\theta\in Q\cap\R$ we have
 $$
 \max (|g_1'(\theta)|, |g_2'(\theta)|) \ge a_1\,\delta_0\,.
 $$
 with a positive absolute $a_1$.
 \end{lemma}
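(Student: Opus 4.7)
The plan is to exploit the simplification that occurs when $\zeta=\theta$ is real. Since $\bar{\lambda}_i(\bar\theta)=\overline{\lambda_i(\theta)}$, the factor $\tfrac12(\lambda_i(\zeta)+\bar\lambda_i(\bar\zeta))$ collapses to $\Re\lambda_i(\theta)$, so for real $\theta$
$$
g_1(\theta)=\sqrt{3}\,\Re\lambda_i(\theta)\sin(\theta-5\pi/6),\qquad g_2(\theta)=-\sqrt{3}\,\Re\lambda_i(\theta)\sin(\theta-\pi/6),
$$
using the identities $c_2-c_1=\sqrt{3}\sin(\theta-5\pi/6)$ and $c_1-c_3=\sqrt{3}\sin(\theta-\pi/6)$ recorded in Section \ref{branch1}.

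Differentiating and writing $u:=(\Re\lambda_i)'(\theta)$, $v:=\Re\lambda_i(\theta)$, I get the linear system
$$
\frac{g_1'(\theta)}{\sqrt{3}}=u\sin(\theta-5\pi/6)+v\cos(\theta-5\pi/6),\qquad -\frac{g_2'(\theta)}{\sqrt{3}}=u\sin(\theta-\pi/6)+v\cos(\theta-\pi/6).
$$
The coefficient matrix has determinant
$$
\sin(\theta-5\pi/6)\cos(\theta-\pi/6)-\cos(\theta-5\pi/6)\sin(\theta-\pi/6)=\sin(-2\pi/3)=-\tfrac{\sqrt{3}}{2},
$$
which is a nonzero absolute constant independent of $\theta$. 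So Cramer's rule yields $|v|\le C\max(|g_1'(\theta)|,|g_2'(\theta)|)$ with an absolute constant $C$.

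It remains to bound $|v|=|\Re\lambda_i(\theta)|$ from below by a constant multiple of $\delta_0$. By the choice of $\delta_0$ in Section \ref{estimates}, the zeros $\lambda_i(\theta)$ lie outside $B(0,\delta_0)$, and by the localization in $[-3^m,3^m]\times[-\delta_0/10,\delta_0/10]$ we have $|\Im\lambda_i(\theta)|\le\delta_0/10$. Hence
$$
|\Re\lambda_i(\theta)|^2=|\lambda_i(\theta)|^2-|\Im\lambda_i(\theta)|^2\ge \delta_0^2-\delta_0^2/100=\tfrac{99}{100}\delta_0^2,
$$
which combined with the Cramer bound gives $\max(|g_1'(\theta)|,|g_2'(\theta)|)\ge a_1\delta_0$ for a suitable absolute $a_1>0$. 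There is no real obstacle here: the only point that requires any care is verifying that the $2\times 2$ determinant is a nonzero absolute constant (independent of $\theta$), which is what makes the argument work uniformly in $\theta\in Q\cap\mathbb{R}$.
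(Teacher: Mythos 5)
Your proof is correct and is essentially the same argument as the paper's: both observe that the derivatives $g_1'(\theta),g_2'(\theta)$ form a linear system in $Y'=(\Re\lambda_i)'$ and $Y=\Re\lambda_i$ whose $2\times 2$ determinant is the absolute constant $\pm 3\sqrt3/2$ (in your normalized version, $\pm\sqrt3/2$), giving $|Y|\lesssim\max(|g_1'|,|g_2'|)$, and then bound $|Y|=|\Re\lambda_i(\theta)|\gtrsim\delta_0$ from the facts that $|\lambda_i|\ge\delta_0$ and $|\Im\lambda_i|\le\delta_0/10$. The paper phrases the determinant step by dividing the two equations and subtracting rather than invoking Cramer's rule explicitly, but the content is identical; your Cramer formulation is, if anything, marginally cleaner at the angles where $c_2-c_1$ or $c_3-c_1$ vanishes.
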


\begin{proof}
We use the notations:
\begin{equation*}
Y(\theta) := Y_{i}(\theta) = \frac12 (\lambda_i(\theta)+\bar\lambda_i(\theta))\,.
\end{equation*}
Also
$g_1'(\theta)=Y'(c_2-c_1)-Y(s_2-s_1)$, and $g_2'(\theta)=Y'(c_3-c_1)-Y(s_3-s_1)$. If $\max (|g_1'(\theta)|, |g_2'(\theta)|)\leq\epsilon$, then it follows that 
$$
|\frac{s_3-s_1}{c_3-c_1}-\frac{s_2-s_1}{c_2-c_1}|\leq\frac{\epsilon (|c_3-c_1|+|c_2-c_1|)}{Y|c_3-c_1||c_2-c_1|}.
$$
 We get $Y\leq C\epsilon$. (One can check that $(c_3-c_1)(c_2-c_1)[\frac{s_3-s_1}{c_3-c_1}-\frac{s_2-s_1}{c_2-c_1}]=3\sqrt{3}/2$.) 
 Recall that $Y(\theta)=\Re\lambda_i(\theta), i=1,..., I'(\theta)$, so $|\Im\lambda_i(\theta)|\le \delta_0/10$. But $|\lambda_i(\theta)|\ge \delta_0$ by the definition of $\delta_0$. So $|Y(\theta)|=|\Re\lambda_i(\theta)| \ge \frac9{10}\delta_0$.
 
\end{proof}

\noindent{\bf Definition.}
$J(\theta_0) := Q\cap\R\cap D(\theta_0, 3^{-Bm})$.

\begin{lemma}
\label{smallderivle0}
If for a given $\theta_0\in Q\cap\R\setminus 2W_m$ in Lemma \ref{dichot1} we have \eqref{smallderiv}, then $|g_2'(\theta) |\ge a_1\delta_0\,,\,\forall \theta\in J(\theta_0)$. 
\end{lemma}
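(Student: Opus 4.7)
The proof will be a short, direct consequence of the preceding two lemmas. The plan is simply to chain Lemma \ref{dichot1} (in the form of the assumed alternative \eqref{smallderiv}) with Lemma \ref{gg}.

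First, I would observe that the set $J(\theta_0) = Q\cap\R\cap D(\theta_0, 3^{-Bm})$ is, by definition, contained in $D''(\theta_0) = D(\theta_0, 3^{-Bm})$. Hence for every $\theta\in J(\theta_0)$ the hypothesis \eqref{smallderiv} yields the pointwise bound $|g_1'(\theta)|\le 3^{-Bm}$. For $m$ sufficiently large (which is harmless since we only care about the asymptotic regime; concretely, whenever $3^{-Bm}<a_1\delta_0$, where $a_1,\delta_0$ are the absolute positive constants from Lemma \ref{gg}), this bound is strictly smaller than $a_1\delta_0$.

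Next, since $\theta\in Q\cap\R$ (because $J(\theta_0)\subset Q\cap\R$) and $\theta_0\notin 2W_m$, Lemma \ref{gg} applies and gives
\[
\max(|g_1'(\theta)|,|g_2'(\theta)|)\ge a_1\delta_0.
\]
Combining this with the inequality $|g_1'(\theta)|<a_1\delta_0$ established in the previous paragraph forces the maximum to be realized by the second term, so $|g_2'(\theta)|\ge a_1\delta_0$ for every $\theta\in J(\theta_0)$, which is exactly the conclusion of the lemma.

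There is essentially no obstacle here: the content of the statement is entirely absorbed into Lemmas \ref{dichot1} and \ref{gg}, and this lemma just records the logical consequence that one of the two derivatives must stay bounded below on the real slice of $D''(\theta_0)$. The only mild point to flag is the implicit requirement that $m$ be large enough that $3^{-Bm}<a_1\delta_0$; this is consistent with the asymptotic framework fixed throughout Section \ref{complex} and involves no additional smallness on $B$.
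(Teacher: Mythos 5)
Your proof is correct and matches the paper's (terse) argument exactly: the paper's proof is simply ``Obvious from Lemma \ref{gg},'' and you have spelled out the intended chaining of the hypothesis \eqref{smallderiv} with the lower bound on $\max(|g_1'|,|g_2'|)$, including the harmless smallness requirement $3^{-Bm}<a_1\delta_0$.
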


\begin{proof}
Obvious from Lemma \ref{gg}.
\end{proof}

We proved
\begin{lemma}
\label{smallderivle}
For a given $\theta_0\in Q\cap\R\setminus 2W_m$ and $i=1,..., I'(\theta_0)$ we can have either 1) \eqref{nofzeros} for $g_{1,i}'$ and $g_{2,i}'$ simultaneously, or  2) $|g_{1,i}'(\theta) |\ge a_1\delta_0\,,\,\forall \theta\in J(\theta_0)$, or 3) $|g_{2,i}'(\theta) |\ge a_1\delta_0\,,\,\forall \theta\in J(\theta_0)$.
\end{lemma}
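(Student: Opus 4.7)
The idea is to apply the dichotomy of Lemma \ref{dichot1} twice, once to $g_{1,i}'$ and once to $g_{2,i}'$ (the latter by the exact same Jensen's-inequality argument as in Lemma \ref{dichot1}, since $g_{2,i}$ enjoys the same analyticity and bounds in $D'(\theta_0)$ by Lemma \ref{lest2}). This produces four cases depending on which side of each dichotomy we land on. I will show that three of these four cases yield conclusions (1), (2), (3) of the lemma, while the fourth cannot occur for $m$ large enough.

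First I would record the two dichotomies: for each $j\in\{1,2\}$, either
\begin{equation*}
\operatorname{card}\{\zeta\in D''(\theta_0): g_{j,i}'(\zeta)=0\} \le B_0(B)\,m\,,
\end{equation*}
or $\|g_{j,i}'\|_{L^{\infty}(D''(\theta_0))} \le 3^{-Bm}$. If both $g_{1,i}'$ and $g_{2,i}'$ fall in the first alternative, then conclusion (1) of the lemma holds directly. If $g_{1,i}'$ satisfies the small-derivative alternative, that is precisely the hypothesis of Lemma \ref{smallderivle0}, so $|g_{2,i}'(\theta)|\ge a_1\delta_0$ for every $\theta\in J(\theta_0)$, yielding conclusion (3). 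By symmetry (the proof of Lemma \ref{smallderivle0} used only Lemma \ref{gg}, which is symmetric in $g_1',g_2'$), if $g_{2,i}'$ satisfies the small-derivative alternative, then $|g_{1,i}'(\theta)|\ge a_1\delta_0$ on $J(\theta_0)$, which is conclusion (2).

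It remains to rule out the fourth case, where both $g_{1,i}'$ and $g_{2,i}'$ satisfy \eqref{smallderiv}. In that case, for every $\theta\in J(\theta_0)\subset D''(\theta_0)$ we would have
\begin{equation*}
\max(|g_{1,i}'(\theta)|, |g_{2,i}'(\theta)|) \le 3^{-Bm}\,.
\end{equation*}
But Lemma \ref{gg} guarantees $\max(|g_{1,i}'(\theta)|, |g_{2,i}'(\theta)|) \ge a_1\delta_0$ at every real $\theta$. Since $a_1$ and $\delta_0$ are absolute positive constants independent of $m$, the inequality $3^{-Bm} < a_1\delta_0$ holds as soon as $m$ is larger than an absolute constant (which we may assume throughout, as $m\to\infty$ in the application). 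This contradiction eliminates the remaining case and completes the proof.

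The one subtlety — and the only real step beyond routine bookkeeping — is verifying that the hypotheses of Lemma \ref{dichot1} are genuinely available for $g_{2,i}'$ as well as $g_{1,i}'$; this amounts to observing that Lemma \ref{lest2} gives the same $L^\infty$ bound $C_2 3^{aBm}$ for $g_{2,i}'$ on $D'(\theta_0)$, so the Jensen-inequality zero count goes through identically. Once that is in hand, the case analysis above is mechanical.
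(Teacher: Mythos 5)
Your proof is correct and follows essentially the same route the paper intends: apply the dichotomy of Lemma \ref{dichot1} to both $g_{1,i}'$ and $g_{2,i}'$ (justified by the symmetric bounds of Lemma \ref{lest2}), and use Lemma \ref{gg} via Lemma \ref{smallderivle0} to convert a small-derivative alternative on one $g_j'$ into a lower bound on the other. The paper records the lemma as an immediate consequence of Lemmas \ref{dichot1}, \ref{gg}, and \ref{smallderivle0}; your write-up supplies exactly the case analysis that chain leaves implicit, and your observation that the ``both small'' case is ruled out by Lemma \ref{gg} for large $m$ is a correct, if optional, remark.
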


Notice that in all three cases
\begin{equation}
\label{mult3}
\max_{x\in\R}\text{card} (J_{\theta_0}\cap g_1^{-1}(x)) \le B_0m\,,\,\,\text{or}\,\,\max_{x\in\R}\text{card} (J_{\theta_0}\cap g_2^{-1}(x)) \le B_0m\,.
\end{equation}
(In the first case both relationships of \eqref{mult3} hold simultaneously.)
In fact, notice that for a smooth real $f$ the cardinality of the pre-image (also called the multiplicity of covering of image) is bounded by the number of zeros of the derivative. In case 2) $g_1'$ does not have any zeros, the same holds in case 3) for $g_2'$. In case 1) we have the bound on the  number of zeros of both $g_1', g_2'$.

\bigskip

In a certain sense \eqref{mult3} is the main claim for the sake of which we launched into investigation of analytic branches of zeros of $\phi_\zeta(z)$. We will need \eqref{mult3} soon, but actually we need a bit more.

\bigskip

Suppose we are {\it not} in the case 2) or 3). Consider functions $p(\zeta)= g_1(\zeta)+g_2(\zeta), m(\zeta)=g_1(\zeta)-g_2(\zeta)$. Then we have the analog of Lemma \ref{dichot1}:

\begin{lemma}
\label{dichot2}
Either
\begin{equation}
\label{mp}
\text{card} \{\zeta\in D'(\theta_0): p'(\zeta)=0\} \le B_0m\,,\,\text{and}\,\text{card} \{\zeta\in D'(\theta_0): m'(\zeta)=0\} \le B_0m\,,
\end{equation}
or
\begin{equation}
\label{mpderiv}
\min(\|m'\|_{L^{\infty}(D'(\theta_0))}, \|p'\|_{L^{\infty}(D'(\theta_0))}) \le 3^{-Bm}\,.
\end{equation}
\end{lemma}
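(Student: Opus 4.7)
The plan is to mimic the proof of Lemma \ref{dichot1} verbatim, but applied separately to each of the two analytic functions $p'(\zeta) = g_1'(\zeta) + g_2'(\zeta)$ and $m'(\zeta) = g_1'(\zeta) - g_2'(\zeta)$, both of which are analytic on $D(\theta_0) \supset D'(\theta_0)$ since $g_1, g_2$ are. By Lemma \ref{lest2} applied to $g_1, g_2$ we have a uniform bound
\[
\|p'\|_{L^\infty(D'(\theta_0))},\ \|m'\|_{L^\infty(D'(\theta_0))}\ \le\ 2C_2\, 3^{aBm}\ =:\ L.
\]

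Now I would argue by dichotomy on whether \eqref{mpderiv} holds. If \emph{either} $\|p'\|_{L^\infty(D'(\theta_0))} \le 3^{-Bm}$ or $\|m'\|_{L^\infty(D'(\theta_0))} \le 3^{-Bm}$, then \eqref{mpderiv} is automatic and we are done. Otherwise, both suprema strictly exceed $3^{-Bm}$, so each of $p'$ and $m'$ attains a value of modulus larger than $3^{-Bm}$ at some point inside, say, $\tfrac{2}{3}D'(\theta_0)$ (shrinking slightly from $D'(\theta_0)$ is harmless because $D'(\theta_0)\subset D(\theta_0)$ with enough room). Apply Jensen's formula exactly as in Lemma \ref{dichot1}: for an analytic function $f$ in $D(\theta_0)$, with $\|f\|_{L^\infty(D(\theta_0))} \le L$ and $|f(a)| \ge 3^{-Bm}$ for some $a$ in a slightly smaller concentric disc, the number of zeros of $f$ in $D'(\theta_0)$ is bounded by a constant multiple of $\log L - \log 3^{-Bm} \le C\bigl((aB+B)m\log 3 + \log(2C_2)\bigr)$.

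Choosing $B_0 = B_0(B)$ large enough (independent of $m$ and of $\theta_0$), this yields simultaneously
\[
\#\{\zeta \in D'(\theta_0): p'(\zeta) = 0\}\ \le\ B_0 m,\qquad \#\{\zeta \in D'(\theta_0): m'(\zeta) = 0\}\ \le\ B_0 m,
\]
which is \eqref{mp}. Hence exactly one of the two alternatives holds, completing the lemma.

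There is really no substantial obstacle: the entire content is the Jensen zero-counting argument already used in Lemma \ref{dichot1}, plus the elementary observation that $p'$ and $m'$ inherit their analyticity and uniform bound from $g_1', g_2'$ via Lemma \ref{lest2}. The only minor bookkeeping point is the choice of the auxiliary concentric subdisc on which to locate the point where $|f| \ge 3^{-Bm}$, which is handled as in Lemma \ref{dichot1}.
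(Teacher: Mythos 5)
Your proof is correct and takes essentially the same approach as the paper, which simply remarks that ``the proof is the same as for Lemma~\ref{dichot1}.'' You have spelled out that Jensen-type zero-counting argument for both $p'$ and $m'$, using the uniform bound from Lemma~\ref{lest2}, exactly as intended.
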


The proof is the same as for Lemma \ref{dichot1}.

If, for example \eqref{mpderiv} happens and, say, $\|m'\|_{L^{\infty}(D'(\theta_0))}\le 3^{-Bm}$, then
$$
||g_1'|-|g_2'|| \le 3^{-Bm}
$$
everywhere on $J(\theta_0)$. Combining this with Lemma \ref{gg}, and choosing large $B$ we conclude that
$$
|g_1'| \ge a_1/2 \delta_0\,\,\text{and}\,\, |g_1'| \ge a_1/2 \delta_0\,\,\text{everywhere on}\,\,J(\theta_0)\,.
$$
So we are back to cases 2) and 3) (simultaneously) of Lemma \ref{smallderivle}.

\bigskip

On the other hand, if \eqref{mp} happens then the number of points $\theta\in J(\theta_0)$ such that
$$
|g_1'(\theta)|=|g_2'(\theta)|
$$ is bounded by $2B_0 m$. In fact, this equality for real $g_1'(\theta), g_2'(\theta)$ may happen only if either
$g_1'(\theta)= g_2'(\theta)$ (so $m'(\theta)=0$) or $g_1'(\theta)=- g_2'(\theta)$ (and so $p'(\theta)=0$). And the number of such points is bounded in \eqref{mp}.

In this latter case, we subdivide $J(\theta_0)$ to intervals $J(s,\theta_0), s=1,..., B_1 m$ according to whether
$$
|g_1'(\theta)|\ge |g_2'(\theta)|\,\,\text{or}\,\, |g_2'(\theta)|\ge |g_1'(\theta)|
$$
everywhere on interval $J_s$.

\bigskip

\begin{theorem}
\label{conclusion}
For every $\theta_0\in Q\cap\R\setminus  2W_m$ we can subdivide interval $J(\theta_0)= \R\cap D(\theta_0, 3^{-Bm})$ into at most $B_1 m$ intervals (may be we even do not need to subdivide at all) $J(s,\theta_0)$ such that on each of them at least one of element of each pair $g_{1,i}(\theta),g_{2,i}(\theta)$, $i=1,...I'(\theta_0)$, is monotone and the modulus of its derivative is at least $a_2\delta_0$.
\end{theorem}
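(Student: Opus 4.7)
The plan is to combine the per-index trichotomy of Lemma \ref{smallderivle} with a global argument that bounds, independently of $I'(\theta_0)$, the number of indices $i$ that actually force any subdivision point. Naively taking one subdivision per $i$ yields $I'(\theta_0)\cdot 2B_0 m\lesssim 3^m\cdot m$ breakpoints, far more than the target $B_1 m$, so the key structural input is to show that the bad indices are few.

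First reduce to a manageable form. The sum-to-product identities give $c_2-c_1=\sqrt{3}\sin(\theta-\tfrac{5\pi}{6})$ and $c_3-c_1=-\sqrt{3}\sin(\theta-\tfrac{\pi}{6})$, so on the real line $g_{1,i}(\theta)=\sqrt{3}\,Y_i(\theta)\sin(\theta-\tfrac{5\pi}{6})$ and $g_{2,i}(\theta)=-\sqrt{3}\,Y_i(\theta)\sin(\theta-\tfrac{\pi}{6})$, and the identity $\alpha g_{2,i}'-\beta g_{1,i}'=\tfrac{3\sqrt{3}}{2}Y_i$ (with $\alpha=c_2-c_1$, $\beta=c_3-c_1$) has $i$-independent coefficients. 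Because $\theta_0\notin 2W_m$ and $|J(\theta_0)|\le 2\cdot 3^{-Bm}$ with $B\ge 100$, both $\alpha$ and $\beta$ have constant sign on $J(\theta_0)$, and $|Y_i|\ge\tfrac{9}{10}\delta_0$ with constant sign for every $i$. For each $i$, Lemma \ref{smallderivle} places the pair in one of three cases; cases 2 and 3 already deliver the conclusion on all of $J(\theta_0)$ with no subdivision, and only case 1, specifically the subcase where both $g_{1,i}'$ and $g_{2,i}'$ actually vanish somewhere in $J(\theta_0)$, can contribute breakpoints. Call such an $i$ bad.

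The crucial observation is that a bad $i$ forces $r_i(\theta):=Y_i'(\theta)/Y_i(\theta)$ to attain both targets $-\cot(\theta_0-\tfrac{5\pi}{6})$ and $-\cot(\theta_0-\tfrac{\pi}{6})$ within $J(\theta_0)$, but these targets differ by $\tfrac{\sqrt{3}/2}{|\sin(\theta_0-\tfrac{5\pi}{6})\sin(\theta_0-\tfrac{\pi}{6})|}\ge c\cdot 3^{40m}$ (using $\theta_0\notin 2W_m$), while $r_i$ is nearly constant on the tiny interval $J(\theta_0)$. Indeed, implicit differentiation of $\phi_\zeta(\lambda_i(\zeta))=0$ yields $\lambda_i'/\lambda_i=\mathcal F(e^{-i\alpha_0\lambda_i})$ for an explicit Mobius $\mathcal F$, and the dispersion relation $1+e^{-i\alpha_0\lambda_i}+e^{-i\beta_0\lambda_i}=0$ forces $e^{-i\alpha_0\lambda_i}$ to lie in an $O(\delta_0)$-neighborhood of one of the two primitive cube-roots of unity $e^{\pm 2i\pi/3}$, so $r_i(\theta_0)$ is pinned near one of the two universal real numbers $\Re\mathcal F(e^{\pm 2i\pi/3})$. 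Hence bad $i$'s can arise only when one of the two targets $-\cot(\theta_0-\tfrac{5\pi}{6})$, $-\cot(\theta_0-\tfrac{\pi}{6})$ sits within $O(\delta_0)$ of one of these two universal values; this is an $O(1)$-codimension coincidence absorbed by the exclusion of $2W_m$, and combined with the near-constancy of $r_i$ on $J(\theta_0)$ it bounds the total count of bad $i$ by an absolute constant $N_{\rm bad}$.

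With $N_{\rm bad}=O(1)$ in hand, the subdivision $\{J(s,\theta_0)\}$ is defined by taking the union, over the bad $i$'s, of the real zeros on $J(\theta_0)$ of $g_{1,i}'$, $g_{2,i}'$, and $|g_{1,i}'|^2-|g_{2,i}'|^2$; this uses at most $N_{\rm bad}\cdot 4B_0 m\le B_1 m$ breakpoints, uniformly in $I'(\theta_0)$. On each resulting subinterval and for every $i$: for non-bad $i$ the conclusion holds on all of $J(\theta_0)$ and is inherited by the subinterval; for bad $i$, between consecutive breakpoints both $g_{1,i}'$ and $g_{2,i}'$ have constant sign, and the pointwise bound $\max(|g_{1,i}'|,|g_{2,i}'|)\ge a_1\delta_0$ from Lemma \ref{gg} together with the subdivision at zeros of $|g_{1,i}'|^2-|g_{2,i}'|^2$ picks out the required monotone element of the pair with $|g'|\ge a_2\delta_0$. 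The main obstacle is making rigorous the count $N_{\rm bad}=O(1)$, which rests on turning the dispersion-relation confinement of $e^{-i\alpha_0\lambda_i}$ near $e^{\pm 2i\pi/3}$ into a hard bound that defeats the a priori possibility of $\sim 3^m$ indices $i$ clustering near a given target value of $r_i$.
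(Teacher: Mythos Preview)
Your diagnosis of the difficulty is accurate: the paper's argument leading up to Theorem \ref{conclusion} (the trichotomy of Lemma \ref{smallderivle}, then the dichotomy of Lemma \ref{dichot2} for $p=g_1+g_2$, $m=g_1-g_2$, and the subdivision at the $\le 2B_0 m$ points where $|g_1'|=|g_2'|$) is carried out for a \emph{fixed} index $i$. The paper never produces a single subdivision valid for all $i$ simultaneously; it simply asserts the uniform statement. So you are attempting something strictly stronger than what the paper actually argues.

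Your proposed route to uniformity, however, does not go through. The lower bound
\[
\frac{\sqrt{3}/2}{|\sin(\theta_0-5\pi/6)\sin(\theta_0-\pi/6)|}\ \ge\ c\cdot 3^{40m}
\]
is false: the exclusion $\theta_0\notin 2W_m$ gives a \emph{lower} bound on the sines, hence only the absolute lower bound $\sqrt{3}/2$ on the target gap (and an \emph{upper} bound of order $3^{80m}$). Next, you identify $r_i=Y_i'/Y_i=\Re\lambda_i'/\Re\lambda_i$ with $\lambda_i'/\lambda_i$; your M\"obius formula computes the latter, and smallness of $\Im\lambda_i$ says nothing about $\Im\lambda_i'$. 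The claimed ``near-constancy'' of $r_i$ on $J(\theta_0)$ would require control of $Y_i''$, which is not provided by Lemma \ref{lest2}. Finally, the core step---that among $\sim 3^m$ zeros at most $O(1)$ can have $e^{-i(c_2-c_1)\lambda_i}$ near a prescribed cube root of unity---is exactly the content you acknowledge as unproved; nothing in the setup rules out many distinct $\lambda_i$ with nearly identical values of $\mathcal F(e^{-i\alpha_0\lambda_i})$.

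For the paper's purposes the per-$i$ statement the paper \emph{does} prove already suffices: in Section \ref{P2} one may, for each fixed $k$, use the $k$-dependent subdivision of $J^t$ into $U_k\le B_1 m$ pieces, estimate $\int_{J^t_{u,k}} R(3^{-s}g_j(k,t,\theta))\,d\theta$ on each piece, and then sum over $(k,s)$ using the bound $\sum_{k,s}\max_\theta\int_{j(s,k,t,\theta)}|\Pshp|^2\,dy\le C_7K/3^{n-m}$. The resulting factor $U_k\le B_1 m$ is precisely the $U$ in \eqref{Jt} and is killed by $(7/9)^{\ell/2}$. So rather than rescuing the uniform statement, the economical fix is to read the theorem as per-$i$ and adjust the application accordingly.
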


\subsection{The set of small values of $P_{1,\theta}(y)$ revisited}
\label{svr}

Of course
$$
SSV(\theta, \ell) \subset \cup_{s=1}^m SSV(s,\theta, \ell)\,,\,\,SSV(s,\theta, \ell):=\{y\in [0,1]: |\phi_{\theta}(3^s\,y)|< 3^{-A\,\ell}\}\,.
$$

To understand $SSV(s,\theta,\ell)$ let us make some notations first. 

$$
R_k:= [3^k, 3^{k+1}]\times [-\delta_0/10, \delta_0/10],\, k=1,..., m-1\,,\, R_:= [0, 1]\times  [-\delta_0/10, \delta_0/10]\,.
$$
$$
\omega_k(\theta):=\cup_{\lambda(\theta)\in R_k\, \text{is a zero of}\,\phi_\theta} B(\lambda(\theta), 3^{-\ell})\,.
$$
Consider
$$
G_k(\theta) := 3^{-k}\omega_k(\theta) \cup 3^{-k-1}\omega_k(\theta)\cup...\cup 3^{-m}\omega_k(\theta)\,.
$$

Finally, let 
$$
G(\theta):= \cup_{k=0}^m G_k(\theta)\,.
$$

\subsection{Putting the set of small values into a small collection of intervals}
\label{ssv2}

\begin{lemma}
\label{SSVk}
$SSV(s,\theta, \ell)\subset  \cup_{k=1}^s G_k(\theta).$
\end{lemma}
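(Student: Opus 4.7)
The plan is to unwind the definitions and use the key containment from Section \ref{small values}, which trapped the set of small values of $\phi_\theta$ in a union of small discs around its zeros.

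First, I would fix $y\in SSV(s,\theta,\ell)$ and set $z:=3^s y$, so that $z$ is a real number with $|\phi_\theta(z)|<3^{-A\,\ell}$ and $z\in[0,3^s]$. Applying the inclusion
$$
\{w=x+iy:\,0<x<3^m,\,-H<y<H,\,|\phi_\theta(w)|<3^{-A\,\ell}\}\subset\bigcup_i D(\lambda_i(\theta),3^{-\ell})
$$
from Section \ref{small values} (valid since $A\ge 1/\eta$), I get some zero $\lambda=\lambda_i(\theta)$ of $\phi_\theta$ with $|z-\lambda|<3^{-\ell}$. Because $z$ is real and $3^{-\ell}<\delta_0/10$ for $\ell$ large enough, $\lambda$ lies in the strip $|\Im\lambda|<\delta_0/10$, and its real part satisfies $\Re\lambda\in[-3^{-\ell},3^s+3^{-\ell}]$.

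Next I would locate $\lambda$ in one of the boxes $R_k$. Since the $R_k$ tile the relevant strip up to real abscissa $3^s$, we must have $\lambda\in R_k$ for some $k\in\{0,1,\dots,s\}$ (the boundary cases are absorbed into the neighbouring box, as in any case the $3^{-\ell}$--discs around $\lambda$ intersect at most one extra box and one can enlarge $\omega_k$ by a constant factor without changing the statement). By the very definition of $\omega_k(\theta)$, we then have $z\in B(\lambda,3^{-\ell})\subset\omega_k(\theta)$.

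Finally, dividing through by $3^s$, we obtain $y=3^{-s}z\in 3^{-s}\omega_k(\theta)$, which is one of the summands of
$$
G_k(\theta)=3^{-k}\omega_k(\theta)\cup 3^{-k-1}\omega_k(\theta)\cup\cdots\cup 3^{-m}\omega_k(\theta),
$$
because $k\le s\le m$. Hence $y\in G_k(\theta)\subset\bigcup_{k=1}^{s}G_k(\theta)$, completing the proof. The only mild obstacle is bookkeeping: verifying that the zero $\lambda$ genuinely lands in one of the listed $R_k$ with $k\le s$ (rather than slightly outside the range), which is handled either by the small-$z$ fact that $B(0,\delta_0)$ avoids zeros of $\phi_\theta$ (so the $k=0$ box matters only via neighbouring discs) or by allowing the constant implicit in the $3^{-\ell}$--neighbourhoods to absorb the boundary overlap.
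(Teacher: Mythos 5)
Your proof is correct and is essentially the paper's own argument, phrased directly rather than contrapositively: both apply the disc-containment Lemma \ref{schke2} to $\phi_\theta$ at the scaled point $3^s y$, locate the nearby zero $\lambda$ in one of the boxes $R_k$ with $k\le s$, and conclude $3^s y\in\omega_k(\theta)$ so $y\in 3^{-s}\omega_k(\theta)\subset G_k(\theta)$. The boundary bookkeeping you flag (a zero landing next to, rather than in, the box containing $3^sy$) is handled the same way in the paper, so the two proofs are interchangeable.
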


\begin{proof}
Choose $y\in [0,1]$. Suppose $y\in [0,1]\setminus \cup_{k=1}^s G_k$.
Let $k'$ be a number such that $3^s y\in R_{k'}$. Then inevitably $s\ge k'$. If $y\in 3^{-s}\omega_{k'}$, then $y\in G_{k'}$. We asumed the contrary, so $y\notin 3^{-s}\omega_{k'}$.  But then $3^s\,y$ is not in any disc centered at a  zero of $\phi_{\theta}$ in $[y-1, y+1]\times [-\delta_0/10, \delta_0/10]$ and radius $3^{-\ell}$. Other such discs (not counted in $\omega_{k'}(\theta)$) are just far enough from the real axis to contain $3^s\,y\in\R$. Then
$$
|\phi_{\theta}(3^s\,y)| \ge 3^{-A\,\ell}
$$ by Lemma \ref{schke2}.  So $y$ is not in $SSV(s,\theta, \ell)$.

\end{proof}

Thus we trapped the set of small values of $P_{1,\theta}$ into at most $C_3\, 3^m$ intervals:

\begin{lemma}
\label{SSVle}
$SSV(\theta, \ell)\subset  G(\theta).$
\end{lemma}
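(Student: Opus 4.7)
The plan is to deduce Lemma \ref{SSVle} directly from Lemma \ref{SSVk} together with a multiplicative pigeonhole argument exploiting the product structure of $P_{1,\theta}$. Essentially all of the difficult complex-analytic work has already been done in Section \ref{complex} and in Lemma \ref{SSVk}; what remains is a short bookkeeping step.

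First I would observe that $P_{1,\theta}(y) = \prod_{s=0}^{m} \phi_{\theta}(3^s y)$ is a product of $m+1$ factors, each satisfying $|\phi_{\theta}| \le 1$ since $\phi_\theta$ is the normalized average of three unit complex exponentials. Consequently, if $y$ satisfies $|P_{1,\theta}(y)| < 3^{-A\ell m}$, then at least one index $s \in \{0,1,\ldots,m\}$ must obey
$$
|\phi_{\theta}(3^s y)| \le 3^{-A\ell m/(m+1)} \le 3^{-A'\ell},
$$
where $A' = Am/(m+1)$ differs from $A$ by a factor harmlessly absorbed into the absolute constant (exactly as in the parallel containment $\Omega(\theta,\ell) \subset \bigcup_{k=0}^m \Omega(k,\theta,3^{-A\ell})$ stated at the opening of Section \ref{complex}). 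Thus
$$
SSV(\theta,\ell) \subset \bigcup_{s=0}^{m} SSV(s,\theta,\ell).
$$

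Now I would apply Lemma \ref{SSVk} term by term, which gives $SSV(s,\theta,\ell) \subset \bigcup_{k=1}^{s} G_k(\theta)$ for each $s$. Taking the union over $s$,
$$
SSV(\theta,\ell) \subset \bigcup_{s=0}^{m} \bigcup_{k=1}^{s} G_k(\theta) \subset \bigcup_{k=0}^{m} G_k(\theta) = G(\theta),
$$
which is exactly the claim. As a by-product, one recovers the interval count advertised in the paragraph preceding the lemma: by the zero-counting estimate \eqref{number}, each scaled copy $3^{-j}\omega_k(\theta)$ consists of at most $C\cdot 3^k$ intervals of length $\le 2\cdot 3^{-j-\ell}$; summing over $k = 0,\ldots,m$ and $j = k,\ldots,m$ yields $\sum_{k=0}^{m}(m-k+1)\,C\,3^k \lesssim m\cdot 3^m$ intervals composing $G(\theta)$.

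The only step that required any thought was the genuine content of Lemma \ref{SSVk}, which in turn rested on controlling the zeros of $\phi_\zeta$ via Rouché's theorem and on the decay estimate from Lemma \ref{schke2}. Once those are in hand there is no substantive obstacle in the present lemma; the argument is simply pigeonhole plus a union. The only mild care is to book-keep the interaction between the exponent $A\ell m$ in the definition of $SSV(\theta,\ell)$ and the exponent $A\ell$ in the definition of $SSV(s,\theta,\ell)$, which as noted above costs nothing beyond a constant adjustment in $A$.
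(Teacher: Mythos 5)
Your argument is correct and matches the paper's implicit proof exactly: the paper treats the containment $SSV(\theta,\ell)\subset\bigcup_s SSV(s,\theta,\ell)$ as immediate (by the very pigeonhole you spell out, with the constant in $A$ adjusted) and then simply applies Lemma~\ref{SSVk} and unions over $s$. One small side remark: your by-product count $\sum_{k=0}^m(m-k+1)\,C\,3^k$ is actually $O(3^m)$ (the geometric weight $3^{k-m}$ makes the sum $\sum_j(j+1)3^{-j}$ converge), which recovers the paper's sharper $C_3\cdot 3^m$ rather than $m\cdot 3^m$; this does not affect the lemma itself.
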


\bigskip

\noindent{\bf Notations.}
Intervals $J(\theta_0)$ cover the compact $Q\cap\R\setminus 2W_m$. They are all of length $3^{-Bm}$. Choose a finite subcover
$\{J^t:=J(\theta_0^t), t=1,...,T\le 3^{Bm}\}$.
Moreover we will think that $J^t$ are half-open, half-closed intervals making {\it disjoint} cover of $Q\cap\R\setminus 2W_m$.

 For each $t\le T$ we have at most $C_3\cdot 3^m$ distinct analytic functions
$\lambda_{k,t} (\zeta), k=1,...,K(t) \le C_3\cdot 3^m,$ in $D(\theta_0^t, 2\cdot 3^{-Bm})$ which are zeros of $\phi_{\zeta}(z), \zeta\in D(\theta_0^t, 2\cdot 3^{-Bm})$ in $[0,3^m]\times [-\delta_0/10, \delta_0/10]$.

We already considered
\begin{eqnarray*}
Y(k,t,\zeta):= \frac12 (\lambda_{k,t} (\zeta) +\bar\lambda_{k,t} (\bar\zeta))\\
y_s(k,t,\zeta):= 3^{-s}Y(k,t,\zeta),\, s=\kappa, \kappa+1,.., m\\
g_1(k,t,\zeta):= Y(k,t,\zeta)(c_2(\zeta)-c_1(\zeta))\\
g_2(k,t,\zeta):= Y(k,t,\zeta)(c_3(\zeta)-c_1(\zeta))\,.
\end{eqnarray*}
Here $\kappa$ is such that $Y(k,t,\theta)\in (3^{\kappa-1}, 3^{\kappa}]$, and  $\kappa=0$ if $Y(k, t, \theta)\in (0,1]$.

We already proved
\begin{lemma}
\label{Y}
$Y(k,t,\zeta) \ge a_1\delta_0\,, y_s(k,t,\zeta) \ge a_1\delta_0\cdot 3^{-m}\,.$ 
\end{lemma}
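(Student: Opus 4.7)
The plan is to reduce the estimate to the real case $\zeta=\theta\in\mathbb{R}$, where everything is essentially contained in the proof of Lemma \ref{gg}, and then use the Rouch\'e-type perturbation bound \eqref{est1} to propagate it to complex $\zeta\in D(\theta_0^t,2\cdot 3^{-Bm})$.

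For real $\zeta=\theta$, the definition $Y(k,t,\theta)=\tfrac{1}{2}(\lambda_{k,t}(\theta)+\overline{\lambda_{k,t}(\theta)})$ reduces to $Y(k,t,\theta)=\Re\lambda_{k,t}(\theta)$. Recall from Subsection \ref{estimates} that $\delta_0$ was chosen so that $B(0,\delta_0)$ contains no zeros of any $\phi_\theta$, hence $|\lambda_{k,t}(\theta)|\ge\delta_0$; by the construction of the $\lambda_{k,t}$ as zeros in $[0,3^m]\times[-\delta_0/10,\delta_0/10]$, we also have $|\Im\lambda_{k,t}(\theta)|\le\delta_0/10$. Combining these two facts,
$$\Re\lambda_{k,t}(\theta)\ge\sqrt{\delta_0^2-\delta_0^2/100}\ge\tfrac{9}{10}\delta_0,$$
which is exactly the estimate already used near the end of the proof of Lemma \ref{gg}. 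So $Y(k,t,\theta)\ge a_1\delta_0$ with (say) $a_1=9/10$.

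To extend to complex $\zeta\in D(\theta_0^t,2\cdot 3^{-Bm})$, the plan is to apply \eqref{est1} to both $\lambda_{k,t}(\zeta)$ and $\lambda_{k,t}(\bar\zeta)$ (and conjugate the latter), yielding
$$|Y(k,t,\zeta)-Y(k,t,\theta_0^t)|\le 3^{-Bm/(20M)}.$$
Since $\delta_0$ is a fixed absolute constant and $3^{-Bm/(20M)}$ is negligible for $B$ large, this perturbation can at worst shrink $a_1$ from $9/10$ to, say, $4/5$; the lower bound $|Y(k,t,\zeta)|\ge a_1\delta_0$ (or $\Re Y(k,t,\zeta)\ge a_1\delta_0$ when $\zeta$ is real) persists. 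Once we have this bound on $Y$, the bound on $y_s=3^{-s}Y$ is immediate from $s\le m$: $y_s(k,t,\zeta)\ge 3^{-s}a_1\delta_0\ge a_1\delta_0\cdot 3^{-m}$.

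There is essentially no obstacle here — the real-variable part was done inside Lemma \ref{gg}, and the complex extension is a one-line application of the perturbation estimate \eqref{est1} proved in Subsection \ref{brzeros}. The only care required is to pick $B$ large enough (depending only on the absolute constants $M$ and $\delta_0$) so that the Rouch\'e-type displacement $3^{-Bm/(20M)}$ is a negligible fraction of $\delta_0$; this is already part of the standing choice of $B$ in the construction.
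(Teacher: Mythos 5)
Your proof is correct and follows the same route the paper has in mind: the paper's ``We already proved'' refers precisely to the last two lines of the proof of Lemma~\ref{gg}, where $|\lambda_i(\theta)|\ge\delta_0$ (choice of $\delta_0$) and $|\Im\lambda_i(\theta)|\le\delta_0/10$ (the strip in which the zeros live) give $Y(\theta)=\Re\lambda_i(\theta)\ge\frac{9}{10}\delta_0$, and the $y_s$ bound follows from $s\le m$. Your explicit perturbation step via \eqref{est1} for complex $\zeta$ is a sensible explication of what the paper leaves implicit when writing the lemma for $\zeta$ rather than $\theta$, and it is carried out correctly.
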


We need 

\noindent{\bf Definition.}
$$
j(s,k,t,\theta):=(y_s(k,t, \theta)- 3^{-s}3^{-\ell}, y_s(k,t, \theta)- 3^{-s}3^{-\ell})
\,,s=\kappa,...,m\,.
$$
We proved
\begin{lemma}
\label{SSV3}
If $\theta\in J^t$ then $SSV(\theta,\ell) \subset \cup_{k=1}^{K(t)}\cup_{s=\kappa}^m j(s,k,t,\theta)\,.$
\end{lemma}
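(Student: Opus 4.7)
My plan is to derive Lemma~\ref{SSV3} by combining Lemma~\ref{SSVle} with the identification of the zeros of $\phi_\theta$ as values of the analytic branches $\lambda_{k,t}(\zeta)$ set up in Subsection~\ref{brzeros}, and then to translate the $3^{-\ell}$-disc language of $\omega_k(\theta)$ into the interval language of $j(s,k,t,\theta)$. No essentially new ingredient is needed; the lemma is a bookkeeping step.

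First, I would take $y\in SSV(\theta,\ell)$. By Lemma~\ref{SSVle}, $y\in G(\theta)=\cup_{k'}G_{k'}(\theta)$, and by the definition of $G_{k'}$ this means there exist $k'\in\{0,\dots,m\}$ and $s\in\{k',\dots,m\}$ with $y\in 3^{-s}\omega_{k'}(\theta)$. Unwinding $\omega_{k'}$, there is a zero $\lambda$ of $\phi_\theta$ lying in the rectangle $R_{k'}\subset[0,3^m]\times[-\delta_0/10,\delta_0/10]$ with $|3^s y-\lambda|\le 3^{-\ell}$. Since $\theta\in J^t\subset D(\theta_0^t,2\cdot 3^{-Bm})$, the uniform smallness of the branches from \eqref{est1} together with the exhaustion $K(t)\le C_3\cdot 3^m$ established in the notations after Lemma~\ref{SSVle} shows that every such zero coincides with exactly one value $\lambda_{k,t}(\theta)$ for some $k\in\{1,\dots,K(t)\}$.

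Next, using that $y$ is real and $|3^s y-\lambda_{k,t}(\theta)|\le 3^{-\ell}$, the real-part estimate gives $|3^s y-\Re\lambda_{k,t}(\theta)|\le 3^{-\ell}$, i.e.\ $|y-y_s(k,t,\theta)|\le 3^{-s-\ell}$, so that $y\in j(s,k,t,\theta)$ by definition. It remains to check the index range. Since $\lambda_{k,t}(\theta)\in R_{k'}$ one has $Y(k,t,\theta)=\Re\lambda_{k,t}(\theta)\in(3^{\kappa-1},3^\kappa]$ for a uniquely determined $\kappa$ (this is precisely the $\kappa$ from the definition preceding Lemma~\ref{Y}), and the requirement $y_s=3^{-s}Y(k,t,\theta)\in(0,1]$ is equivalent to $s\ge\kappa$. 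This precisely matches the index range in the statement.

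The only genuinely nontrivial point is that the zeros of $\phi_\theta$ in the relevant strip really are catalogued by the analytic branches $\lambda_{k,t}$, but this has already been established in Subsection~\ref{brzeros} (Rouché plus the absence of branch points from Lemmas~\ref{realbrp} and~\ref{compdiscr}). Everything else is a direct substitution. Thus the main obstacle is purely notational: making sure the real-part convention for $Y(k,t,\theta)$ on the real line, the scaling by $3^{-s}$, and the definition of $\kappa$ are applied consistently so that $s$ indeed starts at $\kappa$ and not one unit higher or lower.
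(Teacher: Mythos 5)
Your derivation is exactly the bookkeeping the paper leaves implicit — it records no separate argument for Lemma~\ref{SSV3}, only the string of notations preceding it — and the chain Lemma~\ref{SSVle} $\to$ $G_{k'}(\theta)$ $\to$ identify the zero with an analytic branch $\lambda_{k,t}(\theta)$ $\to$ take real parts $\to$ land in $j(s,k,t,\theta)$ is the intended proof and is correct. One small caveat on the index range: from $\lambda_{k,t}(\theta)\in R_{k'}$ one only gets $Y(k,t,\theta)\in[3^{k'},3^{k'+1}]$, while $\kappa$ is defined by the half-open convention $Y\in(3^{\kappa-1},3^\kappa]$, so $\kappa$ can equal $k'+1$ and the derived bound $s\ge k'$ is then one short of $s\ge\kappa$; the offending case $s=k'$ forces $y$ into the sliver $(1-3^{-k'-\ell},1]$ and $Y$ into $(3^{k'},3^{k'}+3^{-\ell}]$, which is harmless for the subsequent integral estimates and is a pre-existing notational mismatch in the paper rather than something you introduced, but ``precisely matches'' overstates what the check actually gives.
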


\begin{lemma}
\label{R2}
For any $y\in j(s,k,t,\theta)$ we have 
$$
R(y(c_2(\theta)-c_1(\theta)))\le C_4 R(y_s(k,t,\theta)(c_2(\theta)-c_1(\theta)))
$$ and 
$$
R(y(c_3(\theta)-c_1(\theta)))\le C_4 R(y_s(k,t,\theta)(c_3(\theta)-c_1(\theta)))\,.
$$
\end{lemma}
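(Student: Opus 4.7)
The plan is to show that the ratio $R(y(c_2(\theta)-c_1(\theta)))/R(y_s(k,t,\theta)(c_2(\theta)-c_1(\theta)))$ is bounded by a universal constant $C_4$ for every $y\in j(s,k,t,\theta)$; the second inequality, involving $c_3(\theta)-c_1(\theta)$, will follow from an identical argument. The key observation is simply that $y$ lies within distance $3^{-s-\ell}$ of $y_s:=y_s(k,t,\theta)$, so each of the $\ell/2$ cosine factors making up $R$ is only slightly perturbed when $y$ is replaced by $y_s$, and the product of these small perturbations must be controlled uniformly.

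First I would fix $k'\in\{m+1,\ldots,m+\ell/2\}$ and estimate, using $|y-y_s|\le 3^{-s-\ell}$ and $|c_2(\theta)-c_1(\theta)|\le 2$,
$$|3^{k'}(y-y_s)(c_2(\theta)-c_1(\theta))|\le 2\cdot 3^{k'-s-\ell}.$$
Since $|\cos\alpha-\cos\beta|\le|\alpha-\beta|$ and $7+2\cos\beta\ge 5$, the ratio of the corresponding $k'$-th factors of $R$ satisfies
$$\frac{7+2\cos(3^{k'}y(c_2-c_1))}{7+2\cos(3^{k'}y_s(c_2-c_1))}\le 1+\frac{4}{5}\cdot 3^{k'-s-\ell}.$$

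Taking the product over $k'=m+1,\ldots,m+\ell/2$ and using $\prod(1+x_j)\le\exp(\sum x_j)$ for $x_j\ge 0$, together with the bound $\sum_{k'=m+1}^{m+\ell/2}3^{k'}\le\tfrac{3}{2}\cdot 3^{m+\ell/2}$ for the geometric sum, one obtains
$$\frac{R(y(c_2-c_1))}{R(y_s(c_2-c_1))}\le\exp\!\Bigl(\tfrac{4}{5}\sum_{k'=m+1}^{m+\ell/2}3^{k'-s-\ell}\Bigr)\le\exp\!\bigl(\tfrac{6}{5}\cdot 3^{m-s-\ell/2}\bigr).$$
The only step that requires any thought is the uniform smallness of the exponent: since $s\ge\kappa\ge 0$ and we will ultimately take $\ell=100Bm$ with $B\ge 100$ (so $\ell/2\gg m$), the quantity $3^{m-s-\ell/2}$ is bounded by $1$ (in fact it is minuscule), and one may take $C_4=e^{6/5}$. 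There is no genuine obstacle, because the sum is dominated by its largest term, which is comfortably small for our eventual choice of $\ell$. The identical computation with $c_3(\theta)-c_1(\theta)$ in place of $c_2(\theta)-c_1(\theta)$ yields the second inequality, completing the proof.
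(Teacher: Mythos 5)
Your proof is correct and takes essentially the same approach as the paper's: compare the Riesz product factor by factor, show each ratio is $1+O(3^{k'-s-\ell})$, and multiply using the geometric decay of the errors. You have in fact written it more carefully than the paper (whose proof contains some typographical inconsistencies about the length of $j(s,k,t,\theta)$), and your identification that the exponent $3^{m-s-\ell/2}$ is bounded once $\ell>2m$ matches the paper's concluding remark.
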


\begin{proof}
The length of the interval $j(s,k,t,\theta)$ is $3^{-s-\ell/2}$, so the diffrence between the last factors in the LHS and the RHS is at most $C_5\,3^{-s-\ell}\cdot 3^{m+\ell/2} $, and because both factors are bounded away from zero by $5/9$ the ration of the last factors in the LHS and RHS diffres from $1$ by at most $C_6\,3^{-s-(\ell/2-m)}$. The second to last factors: the same but their ratio differs from $1$ by at most $C_6\,3^{-s-(\ell/2-m)-1}$.  We continue this comparison, the ratio of the first factors will be different from $1$ by at most $C_6\,3^{-s-(\ell/2-m)-\ell/2}$. Choosing $\ell>2m$ we finish the proof. If we multiply all these ratios we get at most (and at least) an absolute constant.

\end{proof}

\section{Some important standard lemmas}
\label{lemmas}
There are a few important lemmas which we have appealled to repeatedly. The first lemma, Lemma \ref{CET}, uses the Carleson imbedding theorem. Its importance lies in its ability to establish a key relationship between the $L^\infty$ norm of $f_{n,\theta}$ and the $L^2$ norm of $\widehat{f_{n,\theta}}$. This is because the Fourier transform changes the centers of intervals into the frequencies of an exponential polynomial.

The second statement  will be split into Lemmas $\ref{schke1}$, $\ref{schke2}$, and $\ref{schke3}$. They describe standard relationships between a holomorphic function, its zeroes, its boundary values, and its non-zero interior values. Because we use them so often, we have taken the trouble of stating and proving them so as to streamline the main argument of the paper.

\subsection{{\bf A corollary of the Carleson imbedding theorem}}
\begin{lemma}
\label{CET}
Let $j=1,2,...k$, $c_j\in\C$, $|c_j|=1$, and $\alpha_j\in\R$. Let $A:=\lbrace \alpha_j\rbrace_{j=1}^k$. Then
$$
\int_0^1{|\sum_{j=1}^k{c_je^{i\alpha_jy}}|^2dy}\leq C\,k\cdot \sup_{I\text{ a unit interval}}\card \{A\bigcap I\}\,.
$$
\end{lemma}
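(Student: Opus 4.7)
The plan is to expand the $L^{2}$ norm as a double sum and transfer the problem to the Fourier side by integrating against a Schwartz majorant of $\chi_{[0,1]}$ whose Fourier transform has compact support; the compact support forces only ``close'' pairs of frequencies to contribute, which is exactly where the hypothesis on $A$ enters.

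Concretely, the first step is to produce a nonnegative Schwartz function $\psi$ on $\R$ with $\psi(y)\ge \chi_{[0,1]}(y)$ for every $y\in\R$ and $\supp\hat\psi\subset[-1,1]$. This is a standard Paley--Wiener/Beurling--Selberg construction: one can start from a Fej\'er-type function such as $(\sin y/y)^2$, whose Fourier transform is a compactly supported triangle, then rescale, translate, and multiply by a suitable constant so that the resulting $\psi$ dominates the indicator of $[0,1]$ on $\R$.

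Once $\psi$ is in hand, the second step is to expand the integrand and exchange sum and integral. Because $\psi\ge\chi_{[0,1]}\ge 0$, we have
$$
\int_0^1\Bigl|\sum_{j=1}^k c_j e^{i\alpha_j y}\Bigr|^2 dy \;\le\; \int_\R \psi(y)\sum_{j,j'}c_j\bar c_{j'}e^{i(\alpha_j-\alpha_{j'})y}\,dy \;=\; \sum_{j,j'=1}^k c_j\bar c_{j'}\,\hat\psi(\alpha_{j'}-\alpha_j).
$$
Since $\supp\hat\psi\subset[-1,1]$, the double sum on the right reduces to those pairs $(j,j')$ with $|\alpha_j-\alpha_{j'}|\le 1$.

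The third step uses $|c_j|=1$ together with the triangle inequality:
$$
\Bigl|\sum_{j,j'}c_j\bar c_{j'}\hat\psi(\alpha_{j'}-\alpha_j)\Bigr|\;\le\;\|\hat\psi\|_\infty\sum_{j=1}^k \card\{j':\,|\alpha_{j'}-\alpha_j|\le 1\}.
$$
For each fixed $j$, the set $[\alpha_j-1,\alpha_j+1]$ is covered by two unit intervals, so the inner count is at most $2\sup_{I}\card(A\cap I)$. Summing over the $k$ values of $j$ produces the asserted bound with $C=2\|\hat\psi\|_\infty$.

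The only real obstacle is verifying the existence of the majorant $\psi$ in Step~1; everything after it is routine bookkeeping. No estimate is lost along the way, and the dependence on $k$ and on $\sup_I\card(A\cap I)$ comes out as required.
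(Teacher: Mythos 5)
Your proof is correct, but it takes a genuinely different route from the paper's. The paper shifts each frequency $\alpha_j$ into the upper half-plane to $\mu_j=\alpha_j+i$, reinterprets $\sum c_j e^{i\alpha_j y}$ (restricted to $y>0$) as coming from a sum of reproducing-kernel type functions $\sum c_\mu/(x-\mu)$ in $H^2(\C_+)$ via Plancherel, and then applies the Carleson embedding theorem to the discrete measure $\nu=\sum\delta_{\mu_j}$, whose Carleson constant is exactly comparable to $\sup_I\card(A\cap I)$. Your argument instead constructs a nonnegative majorant $\psi\geq\chi_{[0,1]}$ whose Fourier transform is supported in $[-1,1]$, expands the $L^2$ norm as a double sum, and uses bandlimitation to discard all pairs except those with $|\alpha_j-\alpha_{j'}|\leq 1$; the count of those is again governed by $\sup_I\card(A\cap I)$. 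Both give the lemma with an absolute constant. Yours is more elementary and self-contained (Fourier inversion plus a standard Fej\'er/Beurling--Selberg majorant), whereas the paper's plugs directly into Carleson-measure machinery, which the authors evidently wanted to showcase. One small inaccuracy in your write-up: you ask for $\psi$ Schwartz, but then point to the Fej\'er function $(\sin y/y)^2$, which decays only quadratically and is not Schwartz. Schwartz is not actually needed; it suffices that $\psi\geq 0$, $\psi\in L^1$, $\psi\geq\chi_{[0,1]}$, and $\supp\hat\psi$ is compact, and the scaled and shifted Fej\'er kernel supplies exactly that. (A genuinely Schwartz choice also exists --- take $\psi=C\,|\check\varphi|^2(\cdot-1/2)$ with $\varphi\in C_c^\infty$ supported in $[-1/2,1/2]$ --- but there is no need for it.)
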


\begin{proof}
Let $A_1:=\{\mu= \alpha +i: \alpha\in A\}$. Let $\nu:=\sum_{\mu\in A_1} \delta_{\mu}$. This is a measure in $\C_+$. Obviously its Carleson constant
$$
\|\nu\|_C :=\sup_{J\subset \R,\, J\,\text{is an interval}}\frac{\nu(J\times [0,|J|])}{|J|}
$$ can be estimated as follows
\begin{equation}
\label{CETeq}
\|\nu\|_C \le 2\,\sup_{I\text{ a unit interval}}\card \{A\bigcap I\}\,.
\end{equation}

Recall that
\begin{equation}
\label{CETeq1}
\forall f\in H^2(\C_+)\,\,\int_{C_+} |f(z)|^2 \,d\nu(z) \le C_0\, \|\nu\|_C\|f\|_{H^2}^2\,,
\end{equation}
where $C_0$ is an absolute constant.
Now we compute
$$
\int_0^1{|\sum_{j=1}^k{c_je^{i\alpha_jy}}|^2dy}\leq e^2\int_0^1{|\sum_{j=1}^k{c_je^{i(\alpha_j+i)y}}|^2dy}\leq
$$
$$
 e^2\int_0^{\infty}{|\sum_{j=1}^k{c_je^{i(\alpha_j+i)y}}|^2dy} = e^2\int_{\R}|\sum_{\mu\in A_1}\frac{c_{\mu}}{x-\mu}|^2\,,
 $$
 where $c_{\mu} := c_j$ for $\mu= \alpha_j +i$. The last equality is by Plancherel's theorem.
 
 We continue
$$
\int_{\R}|\sum_{\mu\in A_1}\frac{c_{\mu}}{x-\mu}|^2 =\sup_{f\in H^2(C_+),\, \|f\|_2\le 1}\bigg|\langle f,\sum_{\mu\in A_1}\frac{c_{\mu}}{x-\mu}\rangle\bigg|^2=
$$
$$
 4\pi^2\sup_{f\in H^2(C_+),\, \|f\|_2\le 1}|\sum_{\mu\in A_1}c_{\mu}f (\mu)|^2\le C\,\card\{A_1\}\sup_{f\in H^2(C_+),\, \|f\|_2\le 1}\sum_{\mu\in A_1} |f(\mu)|^2 \le
 $$
 $$
 C\,\card\{A\}\sup_{f\in H^2(C_+),\, \|f\|_2\le 1}\int_{C_+}|f(z)|^2\,d\nu(z)\le 
 2C_0C\,\card\{A\}\,\sup_{I\text{ a unit interval}}\card \{A\bigcap I\}\,.
 $$
 This is by \eqref{CETeq1} and \eqref{CETeq}. The lemma is proved.

\end{proof}

\subsection{{\bf A Blaschke estimate}}
\begin{lemma}\label{schke1}
Let $D$ be the closed unit disc in $\C$. Suppose $\phi$ is holomorphic in an open neighborhood of $D$, $|\phi(0)|\geq 1$, and the zeroes of $\phi$ in $\frac{1}{2}D$ are given by $\lambda_1,\lambda_2,...,z_M$. Let $C=||\phi||_{L^\infty(D)}$. Then $M\leq \log_2(C).$
\end{lemma}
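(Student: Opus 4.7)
The plan is to apply Jensen's formula (or equivalently, a Blaschke-product / maximum-principle argument) to $\phi$ on the unit disc. The key leverage is that zeros located inside $\tfrac{1}{2}D$ are quantitatively separated from the boundary, so each such zero contributes a definite amount to the Jensen sum.

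Concretely, I would first note that $\phi(0)\neq 0$, so Jensen's formula applies: writing all zeros of $\phi$ in the open disc $D$ as $\mu_1,\mu_2,\dots$ (a finite list, since $\phi$ is holomorphic on a neighborhood of $\overline{D}$),
\begin{equation*}
\log|\phi(0)| \;=\; \sum_{j} \log|\mu_j| \;+\; \frac{1}{2\pi}\int_0^{2\pi}\log|\phi(e^{i\theta})|\,d\theta.
\end{equation*}
Rearranging and discarding those zeros outside $\tfrac{1}{2}D$ (which only increases the right-hand side, since $\log(1/|\mu_j|)\geq 0$ for $|\mu_j|\leq 1$),
\begin{equation*}
\sum_{k=1}^M \log\frac{1}{|\lambda_k|} \;\leq\; \frac{1}{2\pi}\int_0^{2\pi}\log|\phi(e^{i\theta})|\,d\theta \;-\; \log|\phi(0)|.
\end{equation*}

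Now I would use the two hypotheses: $|\phi(e^{i\theta})|\leq C$ gives that the integral is at most $\log C$, and $|\phi(0)|\geq 1$ gives $-\log|\phi(0)|\leq 0$. On the left, each $\lambda_k$ lies in $\tfrac{1}{2}D$, hence $\log(1/|\lambda_k|)\geq \log 2$, yielding $M\log 2\leq \log C$, i.e.\ $M\leq \log_2 C$.

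There is no serious obstacle here — the only things to check are that $\phi\not\equiv 0$ (immediate from $|\phi(0)|\geq 1$) so Jensen applies, and that dropping zeros in $D\setminus\tfrac{1}{2}D$ is legitimate because they contribute nonnegatively to $\sum\log(1/|\mu_j|)$. If one prefers to avoid quoting Jensen, the same bound follows by forming the finite Blaschke product $B(z)=\prod_{k=1}^M \frac{z-\lambda_k}{1-\bar\lambda_k z}$, observing that $\phi/B$ is holomorphic and nonvanishing on $\overline{D}$ with $|\phi/B|\leq C$ on $\partial D$ (and hence on all of $D$ by the maximum principle), and evaluating at $z=0$ to get $1\leq|\phi(0)|\leq C\prod|\lambda_k|\leq C\,2^{-M}$.
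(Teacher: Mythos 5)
Your proof is correct, and it is essentially the same as the paper's: the paper runs exactly the Blaschke-product/maximum-principle argument you sketch in your closing sentence, obtaining $1\leq|\phi(0)|\leq C\prod_k|\lambda_k|\leq C\,2^{-M}$. Your primary Jensen's-formula route is the standard equivalent reformulation of that argument and is equally valid.
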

\begin{proof}
Let 
$$
B(z)=\prod_{k=1}^M{\frac{z-\lambda_k}{1-\bar{\lambda_k}z}}.
$$
 Then $|B|\leq 1$ on $D$, with $=$ on the boundary. If we let $g:=\frac{\phi}{B}$, then $g$ is holomorphic and nonzero on $\frac{1}{2}D,$ and $|g(e^{i\theta})|\leq C$ $\forall\theta\in [0,2\pi]$. Thus $|g(0)|\leq C$ by the maximum modulus principle. So we have $$C\geq |g(0)|=\frac{|\phi(0)|}{|B(0)|}\geq\prod_{k=1}^M{\frac{1}{|\lambda_k|}}\geq 2^M.$$
\end{proof}

\begin{lemma}\label{schke2}
In the same setting as Theorem $\ref{schke1}$, the following is also true for all $\delta\in (0,1/3)$: $\lbrace z\in\frac{1}{4}D:|\phi|<\delta\rbrace\subseteq\bigcup_{1\leq k\leq M} B(\lambda_k,\e)$, where $$\e:=\frac{9}{16}(3\delta)^{1/M}\leq\frac{9}{16}(3\delta)^{1/\log_2(C)}.$$
\end{lemma}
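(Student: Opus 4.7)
The approach is to continue the Blaschke factorisation $\phi=B\cdot g$ set up in Lemma~\ref{schke1}, where $B(z)=\prod_{k=1}^{M}\frac{z-\lambda_k}{1-\bar\lambda_k z}$ and $g=\phi/B$. From the proof of Lemma~\ref{schke1} one already knows that $g$ is analytic in a neighbourhood of $D$, non-vanishing on $\tfrac12 D$, satisfies $|g|\le C$ on $D$ (by maximum modulus, since $|B|=1$ on $\partial D$), and $|g(0)|\ge 2^{M}$ (because $|B(0)|=\prod|\lambda_k|\le 2^{-M}$ while $|\phi(0)|\ge 1$). I would argue by contrapositive: fix $z\in \tfrac14 D$ with $|z-\lambda_k|\ge\e$ for every $k=1,\dots,M$, and show that $|\phi(z)|\ge\delta$.

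Step~1 (Blaschke lower bound). Since $|\lambda_k|\le 1/2$ and $|z|\le 1/4$, one has $|1-\bar\lambda_k z|\le 1+1/8=9/8$, so each factor of $B$ has modulus at least $8\e/9$, giving
\[
|B(z)|\ \ge\ (8\e/9)^{M}.
\]
Plugging in $\e=\tfrac{9}{16}(3\delta)^{1/M}$ simplifies the right hand side to $(1/2)^{M}(3\delta)$, so it suffices to establish the pointwise lower bound $|g(z)|\ge 2^{M}/3$ throughout $\tfrac14 D$.

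Step~2 (lower bound on $|g|$ via Harnack). Because $g$ has no zeros on $\tfrac12 D$, the function $v:=\log(C/|g|)$ is harmonic and non-negative there. Applying Harnack's inequality on the disc of radius $R=1/2$ to a point with $|z|\le 1/4$ yields
\[
v(z)\ \le\ \frac{R+|z|}{R-|z|}\,v(0)\ =\ 3\,v(0),
\]
equivalently $|g(z)|\ge |g(0)|^{3}/C^{2}\ge 2^{3M}/C^{2}$. Combining with Step~1 gives $|\phi(z)|\ge 3\delta\cdot 2^{2M}/C^{2}$, and one uses the relation $M\le\log_2 C$ from Lemma~\ref{schke1} (so that $C$ and $2^M$ are comparable in the extremal regime) to conclude $|\phi(z)|\ge\delta$. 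The second inequality $(3\delta)^{1/M}\le(3\delta)^{1/\log_2 C}$ appearing in the statement is then immediate from $M\le\log_2 C$ together with $3\delta<1$.

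The main obstacle is Step~2: producing a uniform lower bound on $|g|$ across $\tfrac14 D$ from only the single datum $|g(0)|\ge 2^{M}$, the global cap $|g|\le C$, and the non-vanishing of $g$ on $\tfrac12 D$. Harnack's inequality is natural because $g$ is zero-free on the concentric disc $\tfrac12 D$, but one has to track the Harnack factor $3$ (from the ratio of radii $R=1/2$ and $|z|\le 1/4$) carefully against $C\ge 2^{M}$; an alternative is to apply Schwarz--Pick to $g/C$ and verify that the pseudohyperbolic disc of radius $1/4$ around $g(0)/C$ remains bounded away from $0$. Beyond this step, everything else is just the bookkeeping of the constants $9/8$ (from $|1-\bar\lambda_k z|$), $8\e/9$ (per Blaschke factor), and the Harnack factor $3$, which combine exactly to yield the stated $\e=\tfrac{9}{16}(3\delta)^{1/M}$.
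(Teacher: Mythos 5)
You follow the paper's route exactly: factor $\phi=gB$ with $B$ the Blaschke product over the $\lambda_k$, bound each factor below by $\frac{8}{9}|z-\lambda_k|$ on $\tfrac14 D$ (since $|1-\bar\lambda_k z|\le 9/8$), and plug in $\e=\frac{9}{16}(3\delta)^{1/M}$ to get $|B(z)|\ge 2^{-M}\cdot 3\delta$. That Step~1 computation is correct and matches the paper; what remains is exactly the lower bound $|g|\ge\frac13\,2^{M}$ on $\tfrac14 D$, which the paper asserts via ``Harnack's inequality'' and which then gives $|\phi(z)|\ge\frac13\,2^{M}\cdot 2^{-M}\cdot 3\delta=\delta$.

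Your Step~2 has a genuine gap, and the patch you propose does not close it. Applying Harnack to the non-negative harmonic function $\log(C/|g|)$ on $\tfrac12 D$ gives, as you compute, $|g(z)|\ge 2^{3M}/C^{2}$. This matches $\frac13\,2^{M}$ only when $C\le\sqrt{3}\cdot 2^{M}$. But the information from Lemma~\ref{schke1} is $M\le\log_2 C$, i.e.\ $2^{M}\le C$ --- the \emph{wrong} direction --- so ``$C$ and $2^{M}$ are comparable in the extremal regime'' is not a valid inference: the ratio $C/2^{M}$ can be arbitrarily large, and then $3\delta\cdot 2^{2M}/C^{2}$ is far below $\delta$. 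You correctly identified Step~2 as the crux, but invoking $M\le\log_2 C$ at the end does not supply the missing inequality. Note also that the paper's own phrasing at this point is terse: $|g|$ is not itself a positive harmonic function, so ``Harnack gives $|g|\ge\frac13|g(0)|$'' is not a literal application of Harnack; the naive application to $\log(C/|g|)$ produces, as you found, only $2^{3M}/C^{2}$. In the paper's applications $C$ (and hence $M$) is an absolute constant, so the qualitative conclusion survives with an adjusted constant, but the specific $\e=\frac{9}{16}(3\delta)^{1/M}$ is not established by the argument as you (or the paper) wrote it. The second inequality $(3\delta)^{1/M}\le(3\delta)^{1/\log_2 C}$ you verify correctly.
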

\begin{proof}
Let $\delta\in (0,1/3)$, and let $z\in\frac{1}{4}D$ such that $|z-\lambda_k|>\e\,\,\forall k$. Note that $g$ is harmonic and nonzero on $\frac{1}{2}D$ with $|g(0)|\geq 2^M$. Thus Harnack's inequality ensures that $|g|\geq\frac{1}{3}2^M$ on $\frac{1}{4}D$, so there 
$$
|\phi(z)|\geq |g(z)B(z)| \geq\frac{1}{3}2^M\prod_{k=1}^M{|\frac{z-\lambda_k}{1-\bar{\lambda_k}z}|}\geq(\frac{16\e}{9})^M\frac{1}{3}=\delta.
$$
 We can conclude the proof by the contrapositive.
\end{proof}

\begin{lemma}\label{schke3}
Let $\delta\in (0,1/3)$. Let $\phi$ be holomorphic in the horizontal strip $\R\times(-14H,14H)$, with $H$ large enough, and with $|\phi|\leq C$ in the strip. Let $Box=[x-1,x+1]\times[-H,H].$ Let $max_{z\in Box}|\phi|\geq 1$. Let $\e$ be as in Theorem $\ref{schke2}$, and call the zeroes of $\phi$ in an $\e$ neighborhood of $Box$ by the names $\lambda_1,\lambda_2,...,\lambda_M$. Then $M\leq \log_2(C)$, and 
$$
\lbrace z\in Box:|\phi (z)|\leq\delta\rbrace\subseteq\bigcup_{1\leq k\leq M} B(\lambda_k,\e).
$$
\end{lemma}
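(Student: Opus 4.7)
I will deduce Lemma \ref{schke3} from Lemmas \ref{schke1} and \ref{schke2} by a single rescaling. By hypothesis, pick $z_0\in Box$ with $|\phi(z_0)|\geq 1$, write $z_0=x_0+iy_0$ with $|y_0|\leq H$, and set $R:=12H$. This $R$ is chosen so that three conditions hold simultaneously: (a) $\overline{D(z_0,R)}$ lies in the strip of holomorphy $\R\times(-14H,14H)$, because its vertical reach from the real axis is at most $|y_0|+R\leq 13H$; (b) $Box\subset D(z_0,R/4)$, because every point of $Box$ is within $2\sqrt{1+H^2}\leq 3H=R/4$ of $z_0$ once $H$ is larger than an absolute constant; (c) the $\e$-neighborhood of $Box$ still lies inside $D(z_0,R/2)$, since $\e$ will be small.

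Next, define $\psi(w):=\phi(z_0+Rw)$ on the closed unit disc. Then $|\psi(0)|=|\phi(z_0)|\geq 1$ and $\|\psi\|_{L^\infty(\overline{D})}\leq C$ since $\overline{D(z_0,R)}$ lies in the strip of holomorphy. Lemma \ref{schke1} applied to $\psi$ produces at most $\log_2 C$ zeros of $\psi$ in $\tfrac12 D$; pulling back through the affine map, $\phi$ has at most $\log_2 C$ zeros in $D(z_0,R/2)$, and in particular in the $\e$-neighborhood of $Box$, so $M\leq\log_2 C$. Lemma \ref{schke2} applied to $\psi$ with the same $\delta\in(0,1/3)$ then yields that every $w\in\tfrac14 D$ with $|\psi(w)|<\delta$ lies within $\tilde\e:=\tfrac{9}{16}(3\delta)^{1/M}$ of a zero of $\psi$ in $\tfrac12 D$. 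Rescaling via $w\mapsto z_0+Rw$, every $z\in Box\subset D(z_0,R/4)$ with $|\phi(z)|<\delta$ lies within $R\tilde\e$ of some zero $\lambda_k$ of $\phi$ in $D(z_0,R/2)$, giving the desired inclusion.

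Since $R=12H$ is an absolute constant, the radius $R\tilde\e$ has the same functional form in $\delta$ and $M$ as the $\e$ of Lemma \ref{schke2}, modified only by an absolute multiplicative factor; this is the content of the phrase ``$\e$ as in Theorem \ref{schke2}''. The one subtle point in the plan is the geometric matching: $Box$ has potentially large aspect ratio $1:H$, so the covering disc must have radius comparable to $H$ to contain $Box$ in its quarter while still fitting inside the strip $\R\times(-14H,14H)$, and this is exactly what the hypothesis ``$H$ large enough'' provides. No iteration or covering argument is needed because a single disc centered at $z_0$ handles the whole box at once.
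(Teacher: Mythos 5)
Your proof is correct and follows exactly the approach the paper intends: the paper's entire proof is the single line ``Take $D$ to be a disc of radius $12H$ centered at the $z$ which maximizes $|\phi|$ in $Box$,'' and you have simply unpacked that rescaling, verified the three containment conditions it implicitly requires, and applied Lemmas \ref{schke1} and \ref{schke2} to the rescaled function. Your observation that the covering radius acquires an absolute factor of $R=12H$, so that the $\e$ in the statement should be read up to an absolute multiplicative constant, is a fair and accurate gloss on the paper's slightly informal wording.
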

\begin{proof}
Take $D$ to be a disc of radius $12\,H$ centered at the $z$ which maximizes $|\phi|$ in $Box$.
\end{proof}

\section{Discussion}
\label{discu}

 By replacing our $L^{\infty}$ estimates of $f_{n,\theta}(x)$ by their $L^2$ estimates as in \cite{NPV} we possibly could have improved the estimate \eqref{gooddir} to
 $\mathcal{L}_{\theta, N^C} \le C/K$ by being more restrictive in choosing good directions (now the good direction would mean that there are many stacks of $K$ elements overlapping when projected to $\theta$-direction, or alternatively speaking that the set of overlapping has a considerable measure). This would  improve Theorem \ref{mainth} to  
 
$$
\int_0^{\pi}|\mathcal{L}_{\theta, n}|\,d\theta \le C\, e^{-c\,\sqrt{\log\,n}}\le \frac{C}{\log\,n}\,.
$$

This is still one order far from the power law of \cite{NPV}. Of course the power law is correct. But to improve the last estimate to a power law one would need to sort out the zeros of our trigonometric polynomials in a more careful way than we did or to come up with a different idea.

  \bibliographystyle{amsplain}

\end{document}